\documentclass[10pt,journal]{IEEEtran}

\usepackage{amsmath,amssymb,amsfonts}
\usepackage{graphicx}
\usepackage{enumerate}
\usepackage{subfigure}
\usepackage{enumerate}
\usepackage{xcolor}
\usepackage{balance}

\newcommand{\RR}{\mathbb{R}}

\newcommand{\zero}{\mathbf{0}}
\newcommand{\GG}{\mathcal{G}}
\newcommand{\LL}{\mathcal{L}}
\newcommand{\HH}{\mathcal{H}}

\newenvironment{Proof}{\noindent{\em Proof:\/}}{\hfill $\Box$\par}
\newtheorem{Theorem}{Theorem}
\newtheorem{Lemma}{Lemma}

\newtheorem{Assumption}{Assumption}

\newtheorem{Remark}{Remark}
\newtheorem{Corollary}{Corollary}

\newtheorem{Problem}{Problem}

\allowdisplaybreaks

\begin{document}
 \title{\bf  Leader-following  Consensus
 over Jointly Connected Switching Networks
 is Achievable for Exponentially  Unstable Linear  Systems
 }

\author{Yuhan~Chen,~Tao~Liu,~and~Jie~Huang,~\IEEEmembership{Life Fellow, IEEE}
\thanks{Y. Chen and T. Liu  are with the School of Automation and Intelligent Manufacturing,
Southern University of Science and Technology, Shenzhen 518055, China.
E-mail: {\tt  chenyh2023@mail.sustech.edu.cn; liut6@sustech.edu.cn}. (Corresponding author: Tao Liu)}
\thanks{J. Huang is with the Department of Mechanical and Automation Engineering,
The Chinese University of Hong Kong, Shatin, N.T., Hong Kong.
E-mail: {\tt jhuang@mae.cuhk.edu.hk}.}}
\maketitle

\begin{abstract}
The leader-following consensus problem for general linear multi-agent systems over
jointly connected switching networks has been a challenging problem and the solvability of the problem has been limited to the class of linear multi-agent systems 
 whose system matrix is marginally stable. 
 This condition is restrictive since it even excludes 
 the most commonly used double-integrator system. 
This paper presents a  breakthrough
by demonstrating that leader-following exponential consensus
is achievable for general linear multi-agent systems
over jointly connected switching networks,
even when the system matrix is exponentially unstable.
The degree of instability can be explicitly characterized by two key quantities
that arise from the jointly connected condition on a switching graph.
By exploiting duality,
we further show that the output-based distributed observer design problem
for a general leader system is solvable over jointly connected switching networks,
even when the system matrix is exponentially unstable.
This is also in sharp contrast to the existing distributed observers, which rely on
the assumption that the leader system is marginally stable.
\end{abstract}

\begin{IEEEkeywords}
Leader-follower multi-agent systems, exponential consensus, output-based distributed observer,
jointly connected switching networks.
\end{IEEEkeywords}

\section{Introduction}

Consensus is arguably the most fundamental problem for multi-agent control systems.
It plays a pivotal role in addressing nearly all cooperative control problems,
such as formation control \cite{Lin2005}, \cite{LinWang2014},
distributed optimization \cite{Nedic09},  \cite{Nedic15},
cooperative output regulation \cite{SuHuang12},  \cite{SuHuang12Cyber},
distributed state estimation \cite{WangMorse18}, \cite{ZhangLu24},
and Nash equilibrium seeking of non-cooperative games over networks  \cite{YeHu17},  \cite{YeHu18}.
The core objective of consensus is to design a distributed control law for a multi-agent system
to ensure that all agents' states synchronize to a common trajectory.
Depending on the nature of the common trajectory, the 
 consensus problem can be further categorized into
the leaderless consensus problem and the leader-following consensus problem.
In the leaderless consensus problem, 
one does not concern the time profile of the agents' states and 
the common trajectory is collectively determined by the initial states of all agents. 
In contrast, in the leader-following consensus problem, 
one further requires that the common trajectory be generated 
by the state of an autonomous system called the leader system.
In this sense, the leader-following consensus problem 
somehow poses a more challenging problem than the leaderless consensus problem.

Early research on both the leaderless consensus problem and the leader-following consensus problem focused on multi-agent systems with simple agent dynamics.  
In particular, the two consensus problems were first studied for single integrators in, e.g., \cite{Jadbabaie2003}, \cite{Olfati-SaberMurray04}, and \cite{Ren2005},
and were later extended to double integrators in, e.g., \cite{Hu2007},  \cite{Ren2007}, and \cite{Ren2008}.
It is noted that the system matrix of a single-integrator multi-agent system
has a simple eigenvalue at the origin
and the system matrix of a double-integrator multi-agent system has
a repeated (non-simple) eigenvalue at the origin.
Subsequently, reference \cite{Tuna08} first 
generalized the leaderless consensus result while reference \cite{Ni2010} first 
extended the leader-following consensus result,
both for general linear multi-agent systems without restrictions on the eigenvalues of the system matrix.
Nevertheless, the findings in \cite{Hu2007}, \cite{Ni2010},  \cite{Ren2007}, \cite{Ren2008}, and \cite{Tuna08} all rely on the assumption that the communication network for the multi-agent system remains static and connected.

In practice, the communication network is often time-varying
and may become disconnected due to unexpected link failures or abrupt changes in the ambient environment.
This makes the study of consensus problems over \emph{jointly connected switching networks}---where connectivity is not required at every instant---particularly interesting and challenging.
The concept of joint connectivity was first introduced in \cite{Jadbabaie2003}
 to handle the leaderless and the leader-following consensus problems for a single-integrator multi-agent system.
 Since then, extensive efforts have been devoted to addressing
 both the leaderless consensus problem
\cite{MaQin21, Qin2014, SuHuang-2012TAC, SuLee22, WangZhuFeng2019TAC},
and the leader-following consensus problem \cite{Cheng2008, Hong2007, Ni2010, SuHuang-2012TAC, SuLee22}, for more general linear multi-agent systems over jointly connected switching networks.
Specifically, reference \cite{Hong2007} studied the leader-following consensus problem
for a group of double integrators subject to a static leader,
and reference \cite{Cheng2008} further extended the result to the case of an active leader
but with a constant velocity known to all followers.
By imposing two linear matrix inequalities
on the system matrix and the input matrix of a linear multi-agent system,
reference \cite{Ni2010} also considered the leader-following consensus problem over jointly connected switching networks.
One of the two linear matrix inequalities in \cite{Ni2010} was removed
by reference \cite{SuHuang-2012TAC}, thus only requiring that the system matrix be marginally stable. 
Moreover, both the leaderless and the leader-following asymptotic consensus problems over jointly connected switching networks were tackled in \cite{SuHuang-2012TAC}.
Reference \cite{Qin2014} considered the leaderless consensus problem for a linear multi-agent system
where the system matrix can contain exponentially unstable yet weak enough modes.
However, leaderless exponential consensus in \cite{Qin2014}
is obtained under the condition that the input matrix has a full row rank;
in other words, full-state couplings exist among the agents.
Reference \cite{MaQin21} explored necessary and sufficient conditions for
ensuring leaderless exponential consensus for neutrally stable linear multi-agent systems.
More recently, reference \cite{SuLee22} solved both the leaderless and the leader-following
exponential consensus problems for marginally stable linear multi-agent system via
distributed dynamic output feedback control.

Up to 2019, the marginal stability assumption had been a barrier to the advancement of the study of the two consensus problems over jointly connected switching networks. 
This assumption is undesirable because, under this assumption, 
one cannot even handle the double-integrator system 
which is the most commonly used model for unmanned aerial vehicles, 
mobile robots, and so on. This barrier was broken in 2019 when reference \cite{WangZhuFeng2019TAC} 
showed that the leaderless exponential consensus problem over jointly connected switching networks is solvable 
for linear multi-agent systems with exponentially unstable modes.
By resorting to infinite matrix product,  reference \cite{WangZhuFeng2019TAC} characterized the allowable degree of instability 
by a so-called synchronizability exponent of the switching topology.
One naturally wonders if the breakthrough made in \cite{WangZhuFeng2019TAC} for
the leaderless consensus problem can also be made for the leader-following consensus problem. Indeed, in this paper, we further study the leader-following exponential consensus problem
for general linear multi-agent systems over jointly connected switching networks.
Specifically, 
we first derive a critical value $\delta$ from the switching topology.
This value $\delta$, together with another key quantity $T_\mathrm{c}$ of the jointly connected condition, enables us to establish a bound on the permissible instability of the system matrix
of the linear multi-agent system.
Then, by developing a series of technical lemmas,
we prove that leader-following exponential consensus is achievable even for linear multi-agent systems with exponentially unstable modes, provided the instability remains within the established bound.

Furthermore, we also address the output-based distributed observer design problem
as the dual problem of the leader-following consensus problem.
Leveraging this duality, we show that an exponentially convergent output-based distributed observer can be readily designed for an exponentially unstable linear leader system over jointly connected switching networks.
Similarly, the instability of the leader system should remain within this established permissible instability bound.
The solvability of these two dual problems
represents a significant advance beyond the state of the art in \cite{CaiSuHuang2022},
where both the leader-following exponential consensus problem and the exponentially convergent output-based distributed observer design problem
over jointly connected switching networks were only solved for marginally stable linear multi-agent systems.

The rest of this paper is organized as follows.
Section \ref{Section-Problem} formulates the leader-following exponential consensus problem.
Section \ref{Section-Preliminares} states the main result on the solvability of this problem
and Section \ref{Section-Main-results} provides a detailed proof.
Further, Section \ref{Section-observer} formulates and studies the dual problem of the leader-following consensus problem,
namely, the output-based distributed observer design problem.
Then, Section \ref{Section-Example} presents numerical examples to validate the theoretical results.
Finally, Section \ref{Section-Conclusions} concludes the paper.

\medskip
\noindent\textit{Notation}: $\mathbf{1}_{N}$ denotes an $N$-dimensional column vector whose entries are all $1$.
$\zero $ denotes a matrix of zeros with appropriate dimensions.
Given $a_{i} \in \RR,\, i = 1, \dots,N$, $\mathrm{diag}\left\{a_{1},\ldots,a_{N}\right\}$ 
represents the diagonal matrix whose diagonal entries are $a_{1},\ldots,a_{N}$.
Given $x_{i}\in \RR^{n_{i}},\, i=1, \ldots,N$, $\mathrm{col}\left(x_{1},\ldots,x_{N}\right)=\begin{bmatrix} x_{1}^{T} & \cdots & x_{N}^{T} \end{bmatrix}^{T}$.
$\otimes$ denotes the Kronecker product of matrices.
The kernel of a matrix $A \in \RR^{m \times n}$ is
given by $\mathrm{ker}(A) = \left\{x \in \RR^n: Ax = 0 \right\}$.
$\left\|\cdot\right\|$ denotes the Euclidean norm or
the matrix norm induced by the Euclidean norm.
 $\prod_{r = m}^n \Xi_r$ denotes 
 the product of square matrices $\Xi_n\cdots \Xi_{m+1}\Xi_m$ when $n\ge m$
and $\prod_{r = m}^n \Xi_r$ is taken as an identity matrix when $n <m$.
We use $\lambda_{\max}(A)$ to denote the largest real part 
of the eigenvalues of a square matrix $A$,
i.e., $\lambda_{\max}(A) := \max \left\{\mathrm{Re}(\lambda) : \lambda \in \mathrm{spec}(A)\right\}$,
in which, $\mathrm{spec}(A)$ denotes the spectrum of the matrix $A$.
We call a time function
$\sigma : [0, \infty) \to \mathcal{P} = \left\{1, 2,\ldots, n_0\right\}$,
a piecewise constant switching signal,
if there exists a sequence $\left\{t_j : j =0,1,2,\ldots \right\}$ satisfying
$t_0 = 0$ and $t_{j+1} - t_{j} \geq \tau , \forall\, j =0,1,2,\ldots $, for some $\tau >0$ such that,
for all $t \in [t_j, t_{j+1})$, $\sigma(t) = p$, for some $p \in \mathcal{P}$.
Then $\mathcal{P}$ is called the switching index set,
$\left\{t_j : j =0,1,2,\ldots \right\}$ are called the switching instants,
and $\tau$ is called the dwell time.

\section{Problem Formulation}\label{Section-Problem}

Consider a general linear leader-follower multi-agent system consisting of one leader system
and $N$ follower subsystems.
The leader system is described as
\begin{align}\label{eq-leader}
	\dot x_0 (t) = A x_0 (t), \quad t \geq 0
\end{align}
where $x_0 (t) \in \RR^n$ is the state of the leader system,
and $A \in \RR^{n\times n}$ is the system matrix which can be unstable.
The follower system is described by
\begin{align}\label{eq-system}
	\dot x_i (t) = A x_i (t) + B u_i (t),\quad i = 1, \ldots, N, \quad t \geq 0
\end{align}
where $x_i (t) \in \RR^n$ and $u_i (t) \in \RR^m$ are the state and the control input of the $i$th subsystem, respectively,
and $B \in \RR^{n \times m}$ is the input matrix.

Define a switching graph\footnote{See Appendix \ref{Appendix-graph} for a summary of notation on graph.}
$\bar \GG_{\sigma (t)} = (\bar{\mathcal{V}}, \bar{\mathcal{E}}_{\sigma(t)})$
with the node set $\bar{\mathcal{V}} = \left\{0, 1, \ldots, N \right\}$
and the edge set $\bar{\mathcal{E}}_{\sigma(t)}$ dictated by a piecewise constant switching signal $\sigma(t)$.
We associate note $0$ with the leader system \eqref{eq-leader}
and node $i,\, i = 1, \ldots, N$, with the $i$th subsystem of the follower system \eqref{eq-system}.
Then the edge set $\bar{\mathcal{E}}_{\sigma(t)} \subseteq \bar{\mathcal{V}} \times \bar{\mathcal{V}}$
will be used to describe the communication constraints for
the leader-follower multi-agent system composed of \eqref{eq-leader} and \eqref{eq-system}.
Specifically, at any time instant $t$,
the control input $u_i (t)$ of the $i$th follower subsystem,
$i = 1, \ldots, N$, can access the state $x_j (t)$ of the $j$th agent,
$j = 0, 1, \ldots, N$, if and only if $(j, i) \in \bar{\mathcal{E}}_{\sigma(t)}$.
A collection of controllers designed to satisfy the communication constraints imposed by
 $\bar{\mathcal{E}}_{\sigma(t)}$ is called a distributed control law.

Subjecting to the above communication constraints,
we study a distributed static state feedback control law of the following form:
\begin{align}\label{eq-control-law}
	u_i(t) = K \sum_{j = 0}^{N} a_{ij} (t)
	\left( x_j (t) - x_i (t) \right), \quad i = 1, \ldots, N
\end{align}
where $K \in \RR^{m \times n}$ is the feedback gain matrix to be designed and $a_{ij} (t)$ are entries of the adjacency matrix $\bar{\mathcal{A}}_{\sigma(t)} := [a_{ij} (t)]_{i,j=0}^N \in \RR^{(N+1) \times (N+1)}$
of the switching graph $\bar \GG_{\sigma (t)}$.
For simplicity, we let $a_{ij}(t) = 1$ if $(j, i) \in \bar{\mathcal{E}}_{\sigma(t)}$ and $a_{ij}(t) = 0$ otherwise.

Now we describe the leader-following exponential consensus problem as follows.

\begin{Problem}\label{Problem}
Given the leader-follower multi-agent system composed of \eqref{eq-leader} and \eqref{eq-system},
and the switching graph $\bar \GG_{\sigma (t)}$,
find a distributed control law of the form \eqref{eq-control-law} such that,
for any initial conditions $x_i (0) \in \RR^n,\, i = 0, 1, \ldots, N$,
the solution of the closed-loop system exists over $[0, \infty)$ and satisfies
$\lim_{t \to \infty} (x_i (t) - x_0 (t)) = 0,\, i = 1, \ldots, N$, exponentially.
\end{Problem}

Let $\bar{\LL}_{\sigma(t)} \in \RR^{(N+1) \times (N+1)}$ denote the Laplacian of the switching graph $\bar{\GG}_{\sigma(t)}$
and partition $\bar{\LL}_{\sigma(t)}$ as follows:
\begin{align}\label{eq-bar-LL}
	\bar \LL_{\sigma (t)} = \left[\begin{array}{c|c}
	0 & \zero \\
	\hline -\Delta_{\sigma (t)} \mathbf{1}_N & \HH_{\sigma (t)} \end{array}\right]
\end{align}
where $\Delta_{\sigma (t)} = \mathrm{diag} \left\{a_{10}(t), \ldots , a_{N0}(t) \right\}$
and
\begin{equation}\label{eq-H-sigma-def}
  \HH_{\sigma (t)} = \LL_{\sigma (t)} + \Delta_{\sigma (t)}  \in \RR^{N\times N}.
\end{equation}
In particular, the switching matrix $\HH_{\sigma (t)}\in \RR^{N \times N}$ is called the leader-follower matrix
of the switching graph $\bar \GG_{\sigma (t)}$,
in which, $\LL_{\sigma (t)}$ is the Laplacian of the subgraph $\GG_{\sigma (t)}= (\mathcal{V}, \mathcal{E}_{\sigma(t)})$ of $\bar \GG_{\sigma (t)}$ with $\mathcal{V} = \left\{1, \ldots, N \right\}$ and $\mathcal{E}_{\sigma(t)} = (\mathcal{V} \times \mathcal{V}) \cap \bar{\mathcal{E}}_{\sigma(t)}$.

 Denote the consensus errors between  the follower system   \eqref{eq-system}
 and  the leader system \eqref{eq-leader} as
 \begin{equation}\label{eq-bar-x-i-def}
   \bar{x}_i (t) = x_i (t) - x_0 (t), \quad i = 1, \ldots, N
 \end{equation}
and let
\begin{equation}\label{eq-bar-x-def}
\bar{x}(t)  = \mathrm{col}\left(\bar{x}_1 (t), \ldots, \bar{x}_N (t)\right) \in \RR^{Nn}.
\end{equation}
Then, the error dynamics of the closed-loop system
composed of \eqref{eq-leader} to \eqref{eq-control-law} can be put into the following compact form \cite{CaiSuHuang2022}:
\begin{align}\label{eq-der-L0-otimes-x}
	\dot{\bar{x}}(t) = \left(I_{N}\otimes A - \mathcal{H}_{\sigma(t)}\otimes BK \right)\bar{x}(t), \quad t\ge 0.
\end{align}

From the definition of $\bar{x}(t)$ in \eqref{eq-bar-x-i-def} and \eqref{eq-bar-x-def}, Problem \ref{Problem} is solvable
if and only if $\lim_{t\to\infty}{\bar{x}}(t)=0$ exponentially.
Thus, the solvability of Problem~\ref{Problem} is converted to that of the exponential stabilization problem for the linear switched system \eqref{eq-der-L0-otimes-x}
via the design of the feedback gain matrix $K$.

Three standard assumptions needed for solving Problem~\ref{Problem} are listed below.

\begin{Assumption}\label{Ass-stabilizable}
	The matrix pair $\left(A, B\right)$ is controllable.
\end{Assumption}

\begin{Assumption}\label{Ass-undirected}
The subgraph $\GG_{\sigma (t)}$ is undirected for all $t\ge0$.
\end{Assumption}

\begin{Assumption}\label{Ass-connected}
	There exists a subsequence $\left\{t_{j_k}:k = 0, 1, 2, \ldots \right\}$ of the switching instants
    $\left\{t_j:j = 0, 1, 2, \ldots \right\}$ with $t_{j_0}=0$ and $t_{j_{k+1}} - t_{j_k} \leq T_\mathrm{c}$ for some $T_\mathrm{c} > 0$,
    such that every node $i, i = 1,\ldots , N$, is reachable from node 0 in the union graph
$\bigcup_{t_j\in[t_{j_k},t_{j_{k+1}})}\bar{\mathcal{G}}_{\sigma(t_j)}$.
\end{Assumption}

\begin{Remark}\label{Remark-graph}
 The switching graph ${\mathcal{G}}_{\sigma(t)}$ is called  jointly connected if 
there exists a subsequence $\left\{t_{j_k}:k = 0, 1, 2, \ldots \right\}$ of the switching instants
    $\left\{t_j:j = 0, 1, 2, \ldots \right\}$ with $t_{j_0}=0$ and $t_{j_{k+1}} - t_{j_k} \leq T_\mathrm{c}$ for some $T_\mathrm{c} > 0$,
    such that the union graph
$\bigcup_{t_j\in[t_{j_k},t_{j_{k+1}})}{\mathcal{G}}_{\sigma(t_j)}$ is connected \cite{Jadbabaie2003, SuHuang-2012TAC}. 
Consider the following linear switched system:
\begin{align*}
	\dot{\bar{x}}(t) = \left(I_{N}\otimes A - \mathcal{L}_{\sigma(t)}\otimes BK \right)\bar{x}(t), \quad t\ge 0
\end{align*}
which is the closed-loop error system arising from 
the leaderless consensus problem for linear multi-agent systems over switching networks.
It is noted that, under Assumptions~\ref{Ass-stabilizable} and~\ref{Ass-undirected} 
and the jointly connected condition of ${\mathcal{G}}_{\sigma(t)}$, 
solving the leaderless exponential consensus problem is equivalent to finding $K$ 
such that the null space of the matrix $\sum_{t_j\in[t_{j_k},t_{j_{k+1}})} \mathcal{L}_{\sigma(t_j)}\otimes I_n $ is exponentially stable \cite{SuHuang-2012TAC}.
\end{Remark}

\begin{Remark}\label{Remark-controllable}
 It is noted that if Problem \ref{Problem} is solvable,
 then Assumption \ref{Ass-stabilizable} can be  relaxed to 
 the case where the matrix pair $(A,B)$ is stabilizable.
 In this case, through a Kalman decomposition on the matrix pair $(A,B)$, we have
 \begin{equation*}
   TAT^{-1}=\begin{bmatrix}
             A_\mathrm{c}  & A_{12} \\
              \mathbf{0} &  A_\mathrm{u}
            \end{bmatrix}, \qquad TB=\begin{bmatrix}
                                      B_\mathrm{c} \\
                                      \mathbf{0}
                                    \end{bmatrix}
 \end{equation*}
 for some nonsingular matrix 
$$T = \begin{bmatrix}T_1 \\ T_2\end{bmatrix} \in \RR^{n \times n} \enspace \text{with} \enspace T_1 \in \RR^{n_c \times n} \enspace \text{and} \enspace T_2 \in \RR^{(n - n_c) \times n}, $$ 
such that 
 the matrix pair $(A_\mathrm{c}, B_\mathrm{c}) \in \RR^{n_\mathrm{c} \times n_\mathrm{c}} \times \RR^{n_\mathrm{c} \times m}$ is controllable
 and the matrix $A_\mathrm{u} \in \RR^{(n-n_\mathrm{c}) \times (n-n_\mathrm{c})}$ is Hurwitz
 with $0\le n_\mathrm{c} \le n$.
 Let the feedback gain matrix be
$K=\begin{bmatrix}
             K_{\mathrm{c}} & \mathbf{0} 
           \end{bmatrix} T$ with $K_{\mathrm{c}}\in \RR^{m \times n_{\mathrm{c}}}$. 
           Then, under the following coordinate transformation:
 \begin{equation*}
   \begin{bmatrix}
     \bar{x}_{\mathrm{c}}(t) \\
     \bar{x}_{\mathrm{u}}(t)
   \end{bmatrix} = \begin{bmatrix}I_{N} \otimes T_1\\I_{N} \otimes T_2\end{bmatrix}\bar{x}(t),
   \quad 
    \begin{array}{l}
        \bar{x}_{\mathrm{c}}(t)\in \RR^{N n_\mathrm{c}}\\
        \bar{x}_{\mathrm{u}}(t)\in \RR^{N (n-n_\mathrm{c})}
    \end{array}
 \end{equation*}
the linear switched system \eqref{eq-der-L0-otimes-x} becomes
\begin{subequations}\label{eq-bar-x-Kalman}
  \begin{align}
   \dot{\bar{x}}_{\mathrm{c}}(t) & = \left(I_{N}\otimes A_{\mathrm{c}} - \mathcal{H}_{\sigma(t)}\otimes B_{\mathrm{c}}K_{\mathrm{c}} \right)\bar{x}_{\mathrm{c}}(t) \notag \\
   &\quad\ +\left(I_{N} \otimes A_{12}\right)\bar{x}_{\mathrm{u}}(t)  \label{eq-bar-x_c}\\
   \dot{\bar{x}}_{\mathrm{u}}(t) &  = \left(I_{N}\otimes A_{\mathrm{u}}\right)\bar{x}_{\mathrm{u}}(t), \quad t\ge 0. \label{eq-bar-x_u}
  \end{align}
\end{subequations}
Since system \eqref{eq-bar-x_u} is exponentially stable,
system \eqref{eq-bar-x_c} is in the same form as system \eqref{eq-der-L0-otimes-x},
but subjecting to an exponentially decaying input.
Thus, if system \eqref{eq-der-L0-otimes-x} can be exponentially stabilized via the design of $K$,
then system \eqref{eq-bar-x-Kalman} can also be exponentially stabilized via a similar design of the matrix $K_{\mathrm{c}}$, and hence, Problem \ref{Problem} is solvable by $K=T\begin{bmatrix}
             K_{\mathrm{c}} & \mathbf{0} 
           \end{bmatrix}$.
\end{Remark}

\section{Statement of  Main Result}\label{Section-Preliminares}

Consider the leader-follower matrix $\HH_{\sigma(t)}$ given by \eqref{eq-H-sigma-def}.
Since the switching signal $\sigma(t)$ remains constant
over each time interval $[t_j, t_{j+1}),\, j =0,1,2,\ldots$,
we can denote the nullity of $\HH_{\sigma(t)}$
over each time interval $[t_j, t_{j+1})$ by a nonnegative integer $n_j$.
Furthermore, under Assumption \ref{Ass-undirected},
$\LL_{\sigma (t)}$ is symmetric and positive semi-definite, and so is $\HH_{\sigma (t)}$.
Thus, we can denote the eigenvalues of $\HH_{\sigma (t)}$ over each
time interval $[t_j, t_{j+1}),\, j =0,1,2,\ldots$, in an ascending order as follows:
\begin{equation*}
  0= \lambda_{1}^{(j)}=\cdots=\lambda_{n_{j}}^{(j)}<\lambda_{n_{j}+1}^{(j)}
\le \cdots  \le \lambda_{N}^{(j)}.
\end{equation*}
Meanwhile, we can find a corresponding set of unit eigenvectors as follows:
\begin{equation*}
  \left \{ \xi_1^{(j)}, \ldots, \xi_{n_j}^{(j)}, \xi_{n_j +1}^{(j)},\ldots, \xi_N^{(j)} \right\}
\end{equation*}
which forms an orthonormal basis of $\RR^{N}$.

Then, for $t\in [t_j, t_{j+1}),\, j=0,1,2,\ldots$, we let
\begin{subequations}\label{eq-orthonormal-basis}
  \begin{align}
  Q_{\sigma(t)}&= \begin{bmatrix}
                  \xi_1^{(j)}  & \cdots &  \xi_{n_j}^{(j)}
                 \end{bmatrix} \in \RR^{N \times n_{j}} \\
  \Gamma_{\sigma(t)} &=  \begin{bmatrix}
                           \xi_{n_j +1}^{(j)}  & \cdots &  \xi_N^{(j)}
                         \end{bmatrix}       \in \RR^{N\times (N-n_{j})}
\end{align}
\end{subequations}
and define
\begin{equation}\label{eq-P-sigma-def}
  P_{\sigma(t)} = Q_{\sigma(t)}Q_{\sigma(t)}^T \in \RR^{N \times N}.
\end{equation}
In particular, we let $P_{\sigma(t)} = \zero_{N \times N}$ when $n_j = 0$.
It is clear that $P_{\sigma(t)}$ is an orthogonal projection matrix
onto $\mathrm{ker}\left(\HH_{\sigma(t)}\right)$ at the time instant $t$
and $P_{\sigma(t)}$ satisfies
\begin{equation}\label{eq-P-sigma-P-norm}
  \left\| P_{\sigma(t)}\right\| \le 1, \quad \forall\, t\ge0.
\end{equation}

\begin{Remark}
It is interesting to point out that,
when the graph  $\bar \GG_{\sigma(t)}$ is static as $\bar \GG$,
Assumption \ref{Ass-undirected} reduces to that $\GG$ is undirected
and Assumption \ref{Ass-connected} reduces to that every node
$i,\, i = 1,\ldots , N$, is reachable from node 0 in the graph $\bar \GG$.
For this special case,
the leader-follower matrix $\HH_{\sigma(t)}$ becomes $\HH$, which is positive definite.
Consequently, the orthogonal projection matrix $P_{\sigma(t)}$
onto $\mathrm{ker}\left(\HH\right)$  becomes $\mathbf{0}_{N \times N}$.
\end{Remark}

\begin{Lemma}\label{lemma-connect-graph}
	Under Assumptions \ref{Ass-undirected} and \ref{Ass-connected},
	there exists a number $0 < \delta < 1$ such that
\begin{equation}\label{eq-norm-products-le-1-forall}
	\left\|\prod_{r=j_k}^{j_{k+1}-1} P_{\sigma(t_{r})}\right\| \leq \delta, \quad \forall\, k =0,1,2,\ldots.
\end{equation}
\end{Lemma}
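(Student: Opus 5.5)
The proof has two parts. First, for each $k$, the product of projections has norm strictly less than one. Second, there are only finitely many possible products, which gives a uniform $\delta$.

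\textbf{Step 1: each product has norm strictly less than one.} Fix $k$ and write $M_k=\prod_{r=j_k}^{j_{k+1}-1} P_{\sigma(t_r)}$. Each factor is an orthogonal projection, so $\|M_k\|\le 1$ by \eqref{eq-P-sigma-P-norm}. Suppose $\|M_k\|=1$. The unit sphere of $\RR^N$ is compact, so there is a unit vector $v$ with $\|M_k v\|=1$.

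Set $v_0=v$ and $v_{s+1}=P_{\sigma(t_{j_k+s})}v_s$. By Pythagoras, $\|v_s\|^2=\|v_{s+1}\|^2+\|v_s-v_{s+1}\|^2$. Since the norms cannot drop below one along the chain, every step satisfies $v_{s+1}=v_s$. Hence $v$ is fixed by every $P_{\sigma(t_r)}$ with $t_r\in[t_{j_k},t_{j_{k+1}})$, that is, $v\in\ker(\HH_{\sigma(t_r)})$ for all such $r$. Therefore $v$ lies in the kernel of $\sum_r \HH_{\sigma(t_r)}$.

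This sum is the leader-follower matrix $\HH_\cup$ of the union graph, up to multiplicities of edges, which do not change the kernel. Assumption~\ref{Ass-undirected} makes every $\HH_{\sigma(t_r)}$ positive semidefinite, which is what lets us pass between the kernel of the sum and the intersection of the kernels. Under Assumption~\ref{Ass-connected}, every follower is reachable from node $0$ in the union graph.

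We then use the standard fact, quoted as in \cite{SuHuang-2012TAC,CaiSuHuang2022}, that the union leader-follower matrix is positive definite in this situation. For completeness, take $v^T\HH_\cup v=\sum_{(i,j)}(v_i-v_j)^2+\sum_i a_{i0}v_i^2=0$. Then $v$ is constant along every follower edge, and $v_i=0$ at every follower adjacent to the leader. Reachability from node $0$, with the follower subgraph undirected, propagates the zero value to every node along a path from the leader, so $v=0$. This contradicts $\|v\|=1$, hence $\|M_k\|<1$.

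\textbf{Step 2: the bound is uniform in $k$.} This is the main obstacle, since the interval lengths and switching sequences change with $k$. The dwell time $\tau$ gives at most $T_\mathrm{c}/\tau+1$ switching instants in each interval $[t_{j_k},t_{j_{k+1}})$. Each $\sigma(t_r)$ takes values in the finite set $\mathcal{P}$. A projection $P_{\sigma(t)}$ depends only on the graph, as the projection onto the kernel of $\HH_p$, and not on the chosen eigenbasis. So the set of possible words $(P_{p_1},\dots,P_{p_L})$ with $L\le T_\mathrm{c}/\tau+1$ that can arise, each satisfying the reachability condition of Assumption~\ref{Ass-connected}, is finite.

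Let $\delta$ be the maximum of $\|P_{p_L}\cdots P_{p_1}\|$ over this finite set of admissible words. By Step 1 each of these norms is less than one, so $\delta<1$. If the set of admissible words contains only products whose norm is $0$, for instance when some $\HH_p$ is already nonsingular, replace $\delta$ by any number in $(0,1)$ to ensure $\delta>0$. This gives \eqref{eq-norm-products-le-1-forall} for every $k$.
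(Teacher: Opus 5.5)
Your proof is correct and follows the same route as the paper's: you show that the fixed spaces of the projections over one window intersect only in $\{0\}$ because the summed leader-follower matrix of the union graph is nonsingular, conclude that each windowed product has norm strictly below one, and then take a maximum over the finitely many possible products of bounded length. The differences are refinements rather than a new approach. You prove inline the two facts the paper only cites: your Pythagoras argument for orthogonal projections replaces \cite[Lemma 4]{Jadbabaie2003}, and your quadratic-form argument replaces \cite[Lemma~2.3]{CaiSuHuang2022}. You also note that $P_p$ does not depend on the choice of eigenbasis, and you handle the case where the maximum in \eqref{eq-delta-def} equals $0$ (e.g.\ when some $\HH_p$ is nonsingular); the paper overlooks this case when it asserts $\delta>0$.
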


\begin{Proof}
Fix a nonnegative integer $k$
and consider the following set of linear equations:
\begin{align}\label{eq-kerL}
	\left(I_{N} - P_{\sigma(t_r)}\right)x = 0,
\quad  r = j_{k}, j_{k}+1, \ldots, j_{k+1}-1.
\end{align}
Premultiplying each of the above equations by $\HH_{\sigma(t_{r})}$ yields
\begin{align*}
	\HH_{\sigma(t_r)} x = 0, \quad r = j_{k}, j_{k}+1, \ldots, j_{k+1}-1.
\end{align*}
Hence, we have $\left(\sum_{r=j_k}^{j_{k+1}-1} \HH_{\sigma(t_r)}\right)x=0$.
By \cite[Lemma~2.3]{CaiSuHuang2022}, the matrix $- \sum_{r=j_k}^{j_{k+1}-1} \HH_{\sigma(t_r)}$ is Hurwitz
if Assumption~\ref{Ass-connected} holds.
Thus, under Assumptions \ref{Ass-undirected} and \ref{Ass-connected},
the linear equations in \eqref{eq-kerL} have a unique solution $x = 0$, i.e.,
\begin{align*}
	\bigcap_{r=j_{k}}^{j_{k+1}-1} \mathrm{ker}\left(I_{N} - P_{\sigma(t_r)}\right)=\{0\}.
\end{align*}
Furthermore, since $\left\|P_{\sigma(t_{r})}\right\|  \le 1,\, \forall\, r=j_{k}, j_{k}+1, \ldots, j_{k+1}-1$,
by \cite[Lemma 4]{Jadbabaie2003}, we can obtain
\begin{equation}\label{eq-norm-products-le-1}
	\left\|\prod_{r=j_{k}}^{j_{k+1}-1} P_{\sigma(t_{r})}\right\| < 1.
\end{equation}

It is noted that, under Assumption \ref{Ass-connected},
$j_{k+1} - j_{k} \leq \left\lceil\frac{T_\mathrm{c}}{\tau} \right\rceil,\, \forall\, k=0,1,2,\ldots$.
Since the switching index set is finite,
the number of matrix products of the form $\prod_{r=j_{k}}^{j_{k+1}-1} P_{\sigma(t_{r})}$ is also finite.
Therefore, we can define
\begin{equation}\label{eq-delta-def}
  \delta:= \max_{k=0,1,2,\ldots}\left\|\prod_{r=j_{k}}^{j_{k+1}-1} P_{\sigma(t_{r})}\right\|.
\end{equation}
Then, it follows from \eqref{eq-norm-products-le-1} and \eqref{eq-delta-def}
that $0< \delta < 1$ and $\delta$ satisfies \eqref{eq-norm-products-le-1-forall}.
\end{Proof}

Now we are ready to state the main result of this paper.

\begin{Theorem}\label{Theorem}
  Under Assumptions \ref{Ass-stabilizable} to \ref{Ass-connected},
  Problem \ref{Problem} is solvable if the following condition holds:
	\begin{align}\label{eq-restriction-A}
		\lambda_{\max}(A) < - \frac{\ln \delta}{T_\mathrm{c}}
	\end{align}
 in which, $T_\mathrm{c} > 0$ and $0 < \delta < 1$ are asserted in Assumption \ref{Ass-connected} and Lemma \ref{lemma-connect-graph}, respectively.
\end{Theorem}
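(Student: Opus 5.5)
The plan is to choose a low-gain/high-gain type feedback $K$, independent of the switching, so that on every switching interval the transition matrix of \eqref{eq-der-L0-otimes-x} is close to $P_{\sigma(t_j)}\otimes e^{A(t_{j+1}-t_j)}$. Products of these matrices over joint-connectivity windows then inherit the contraction $\delta$ of Lemma~\ref{lemma-connect-graph}. Condition \eqref{eq-restriction-A} is exactly what makes $\delta$ beat the growth of $e^{At}$ over a window of length at most $T_\mathrm{c}$.

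\emph{Gain design.} Let $\underline\lambda>0$ be the smallest nonzero eigenvalue of $\HH_p$ over the finite set $p\in\mathcal{P}$. For $\beta>0$, let $W_\beta>0$ solve $(A+\beta I)W+W(A+\beta I)^T=BB^T$. This $W_\beta$ exists by Assumption~\ref{Ass-stabilizable}, since $-(A+\beta I)$ is Hurwitz for $\beta>\lambda_{\max}(A)$. Set $X_\beta=W_\beta^{-1}$ and $K=\mu B^TX_\beta$ with $\mu\underline\lambda\ge 1/2$. A direct computation gives, for every $c\ge \mu\underline\lambda$,
\begin{equation*}
(A-cBK)^TX_\beta+X_\beta(A-cBK)=-2\beta X_\beta-(2c\mu-1)X_\beta BB^TX_\beta\le -2\beta X_\beta .
\end{equation*}
Hence $\|e^{(A-cBK)h}\|\le \kappa_\beta e^{-\beta h}$, where $\kappa_\beta=\sqrt{\|X_\beta\|\|X_\beta^{-1}\|}$.

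\emph{One-interval structure.} On $[t_j,t_{j+1})$, with $h_j=t_{j+1}-t_j\ge\tau$, diagonalize $\HH_{\sigma(t_j)}$ by the orthonormal basis \eqref{eq-orthonormal-basis}. The transition matrix is then exactly
\begin{equation*}
\Phi_j=P_{\sigma(t_j)}\otimes e^{Ah_j}+\sum_{i>n_j}\xi_i^{(j)}\xi_i^{(j)T}\otimes e^{(A-\lambda_i^{(j)}BK)h_j}=:P_{\sigma(t_j)}\otimes e^{Ah_j}+E_j .
\end{equation*}
Here $\|E_j\|\le \kappa_\beta e^{-\beta h_j}\le \kappa_\beta e^{-\beta\tau}$. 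The second term is a sum of orthogonal blocks, so its norm is the largest block norm. The step I expect to be the main obstacle is showing $\varepsilon(\beta):=\kappa_\beta e^{-\beta\tau}\to0$ as $\beta\to\infty$. I would try to prove that $\|X_\beta\|\|W_\beta\|$ grows only polynomially in $\beta$. One route is to bring $(A,B)$ to controllability canonical form and use the known polynomial scaling of the parametric Lyapunov solution, i.e.\ the explicit Gramian $W_\beta=\int_0^\infty e^{-(A+\beta I)s}BB^Te^{-(A+\beta I)^Ts}ds$. If that fails, I would replace this family by any high-gain family with rate $\beta$ and polynomial overshoot.

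\emph{Window products.} First take $\varepsilon>0$ with $\lambda_{\max}(A)+\varepsilon<-\ln\delta/T_\mathrm{c}$, and $M\ge1$ with $\|e^{At}\|\le Me^{\gamma t}$ for $t\ge0$, where $\gamma:=\max\{\lambda_{\max}(A)+\varepsilon,0\}$. Then $\rho:=\delta e^{\gamma T_\mathrm{c}}<1$ (if $\gamma=0$ this is just $\delta<1$). By the mixed-product rule, the unperturbed product over $m$ consecutive windows equals $\big(\prod P_{\sigma(t_r)}\big)\otimes e^{A(t_{j_{k+m}}-t_{j_k})}$. By Lemma~\ref{lemma-connect-graph} it has norm at most $\delta^m Me^{\gamma mT_\mathrm{c}}=M\rho^m$, so the constant $M$ appears only once. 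Fix $m$ with $M\rho^m\le1/4$. The window $[t_{j_k},t_{j_{k+m}})$ contains at most $L:=m\lceil T_\mathrm{c}/\tau\rceil$ switching intervals and has length at most $mT_\mathrm{c}$. Expand $\prod(P\otimes e^{Ah}+E_r)$. Each term containing at least one $E_r$ is bounded by $\varepsilon(\beta)$ times a constant $C$ depending only on $M,\gamma,m,T_\mathrm{c},L$ (using $\|P\|\le1$ and $\|\Phi_r\|\le Me^{\gamma h_r}+1$). There are at most $2^L$ such terms, so the perturbed product has norm $\le 1/4+2^LC\varepsilon(\beta)\le 1/2$ for $\beta$ large.

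\emph{Conclusion.} The state transition over every block of $m$ joint-connectivity windows contracts by $1/2$. The block length is at most $mT_\mathrm{c}$, and the transition within a block is uniformly bounded. Hence $\|\bar x(t)\|\le c_1e^{-c_2t}\|\bar x(0)\|$, so $\bar x(t)\to0$ exponentially and Problem~\ref{Problem} is solved. Existence of solutions on $[0,\infty)$ is immediate since the system is linear and piecewise constant.
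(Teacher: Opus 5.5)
Your proposal is correct and follows the paper's architecture. It uses the same splitting of each interval's transition matrix into $P_{\sigma(t_j)}\otimes e^{Ah_j}$ plus an orthogonal block part built from $A-\lambda_i^{(j)}BK$. It uses Lemma~\ref{lemma-connect-graph} with the mixed-product rule to get $M(\delta e^{\gamma T_\mathrm{c}})^m$, which is the paper's $C_1(\delta e^{\lambda^* T_\mathrm{c}})^\ell$. It also makes the choices in the same order: number of windows first, then the gain.

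You differ only in ingredients. You use the infinite-horizon shifted Gramian $W_\beta$ instead of the finite-horizon weighted Gramian $W_\mathrm{c}(\alpha,t^*)$ of \eqref{eq-Gramian}. You also use the minimal nonzero eigenvalue over the finite family $\{\HH_p\}$ instead of the explicit $\lambda_H$ of Lemma~\ref{lemma-connected-H}; the latter buys a gain computable from $N$ alone, without knowing the graphs.

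Your bookkeeping is slightly sharper than the paper's. You bound every perturbation factor by $\varepsilon(\beta)$ using $h_r\ge\tau$, so the only property you need from the gain family is $\kappa_\beta e^{-\beta\tau}\to 0$. In \eqref{eq-lemma-Psi1}, by contrast, the paper bounds $\|\Phi_r(\tau_r)\|$ by $C_0(t^*)=\sqrt{\lambda_M/\lambda_m}\,e^{\alpha t^*/2}$. Hence $C_3(t^*)$ itself grows with $\alpha$, and enlarging $\alpha$ tacitly requires $t^*$ to be small relative to $\tau$. Your formulation makes that dependence explicit.

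The step you flag as the main obstacle closes without any polynomial asymptotics.
\begin{itemize}
\item Fix $\theta\in(0,\tau)$ and let $G_\theta=\int_0^{\theta}e^{-As}BB^Te^{-A^Ts}\,ds$. This is positive definite by Assumption~\ref{Ass-stabilizable}.
\item Truncating the integral representation of $W_\beta$ gives $W_\beta\ge e^{-2\beta\theta}G_\theta$.
\item For $\beta>\|A\|$ one has $\|W_\beta\|\le \|B\|^2/(2(\beta-\|A\|))$.
\item Hence, for $\beta\ge\|A\|+1$, $\kappa_\beta\le c\,e^{\beta\theta}$ with $c=\|B\|/\sqrt{2\lambda_{\min}(G_\theta)}$.
\item Therefore $\varepsilon(\beta)\le c\,e^{-\beta(\tau-\theta)}\to 0$.
\end{itemize}
This is the same sandwich idea that produces the paper's $C_0(t^*)$.

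There are two minor slips.
\begin{itemize}
\item $-(A+\beta I)$ is Hurwitz once $\beta>-\min_i\mathrm{Re}\,\lambda_i(A)$, not once $\beta>\lambda_{\max}(A)$. This is harmless, since $\beta>\|A\|$ suffices.
\item The Lyapunov identity should be claimed for $c\ge\underline\lambda$ rather than $c\ge\mu\underline\lambda$. That is what gives $2c\mu-1\ge 2\mu\underline\lambda-1\ge 0$.
\end{itemize}
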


\begin{Remark}
The inequality \eqref{eq-restriction-A} indicates that
the leader-following exponential consensus problem for linear multi-agent systems
over jointly connected switching networks is solvable
even if the system matrix $A$ is exponentially unstable.
This is a significant advancement over the state of the art in \cite{CaiSuHuang2022},
which requires that the system matrix $A$ be marginally stable.
\end{Remark}

\begin{Remark}
The solvability condition \eqref{eq-restriction-A}
is in a similar form to the solvability condition (4) established in \cite{WangZhuFeng2019TAC} for the leaderless exponential consensus problem for general linear multi-agent systems over jointly connected switching networks.
The major difference is that our approach studies the products of orthogonal projection matrices onto the kernel of the leader-follower matrix $\mathcal{H}_{\sigma(t)}$
of the switching graph $\bar \GG_{\sigma (t)}$
to quantify the key value $\delta$. 
In contrast, 
reference \cite{WangZhuFeng2019TAC}
defines $P_{\sigma(t)}$ as an orthogonal projection matrix
onto $\mathrm{ker}\left(\LL_{\sigma(t)}\right)$,
i.e., the kernel of the Laplacian of the switching graph $\GG_{\sigma (t)}$ for the leaderless multi-agent system,
and then studies the products of the matrices $P_{\sigma(t_{r})}-P$ with $P=\frac{1}{N}\mathbf{1}_{N}\mathbf{1}_{N}^{T}$.
\end{Remark}


\section{Proof of Main Result}\label{Section-Main-results}

\subsection{Solution of Linear Switched System \eqref{eq-der-L0-otimes-x}}\label{Subsection-Problem-Conversion}

In this subsection, we  focus on analyzing the linear switched system \eqref{eq-der-L0-otimes-x}.
We first note that,
since the switching signal $\sigma(t)$ is piecewise constant,
 system \eqref{eq-der-L0-otimes-x} can be rewritten into the following form:
\begin{align}\label{eq-bar-x-rewrite}
	\dot{\bar{x}}(t) &= (I_{N}\otimes A -\mathcal{H}_{\sigma(t_j)}\otimes BK) \bar{x}(t), \quad \notag \\
 &\qquad\qquad\qquad\quad t \in [t_j, t_{j+1}),\  j =0,1,2,\ldots.
\end{align}

Next, over each time interval,
we perform an orthogonal transformation as follows:
\begin{align}\label{eq-x-hat}
	 \begin{bmatrix} \hat x_1(t) \\ \hat x_2(t) \end{bmatrix} & = \left(\begin{bmatrix}
	                                                              Q_{\sigma(t_j)}^{T} \\
	                                                              \Gamma_{\sigma(t_j)}^{T}
	                                                            \end{bmatrix} \otimes I_{n} \right)
\bar{x}(t),  \notag \\
& \qquad\qquad\qquad   t \in [t_j, t_{j+1}), \  j =0,1,2,\ldots
\end{align}
where $\hat x_1 (t)\in \RR^{n_j n}$, $\hat x_2 (t) \in \RR^{(N - n_j) n}$ with $n_j$ being the nullity of $\HH_{\sigma(t_j)}$,
and $Q_{\sigma(t_j)}$ and  $\Gamma_{\sigma(t_j)}$ are defined in \eqref{eq-orthonormal-basis}.

In the new coordinate, system \eqref{eq-bar-x-rewrite} reads as
\begin{subequations}\label{eq-x1x2}
\begin{align}
		\dot{\hat{x}}_{1}(t) & = \left(  I_{n_j}\otimes A \right) {\hat x_1}(t), \quad  t \in [t_j, t_{j+1}), \ j=0,1,2,\ldots \\
		\dot{\hat{x}}_{2}(t) & = \left(  I_{N-n_j} \otimes A - \Lambda_j \otimes B K \right) {\hat x_2}(t) \label{eq-x2}
\end{align}
\end{subequations}
where
\begin{equation*}
  \Lambda_j = \mathrm{diag}\left\{\lambda_{n_j +1}^{(j)},\ldots,\lambda_{N}^{(j)}\right\}\in \RR^{(N-n_{j})\times (N-n_{j})}
\end{equation*}
with $\lambda_{p}^{(j)},\, p = n_j +1, \ldots, N,$ being the nonzero eigenvalues of
the matrix $\HH_{\sigma(t_j)}$.

For notational simplicity, denote
\begin{align}\label{eq-M}
M_j = \mathrm{diag}\left\{A - \lambda_{n_j +1}^{(j)}BK, \ldots, A - \lambda_{N}^{(j)}BK\right\}.
\end{align}
Then, it is clear that the solution of system \eqref{eq-x1x2} satisfies
\begin{align}\label{eq-x-hat-solution}
	\begin{bmatrix} \hat{x}_1(t) \\ \hat{x}_2(t) \end{bmatrix} =
	\begin{bmatrix} I_{n_j} \otimes \mathbf{e}^{A(t-t_j)} & \zero \\ \zero & \mathbf{e}^{M_j(t-t_j)} \end{bmatrix}
	\begin{bmatrix} \hat{x}_1(t_j) \\ \hat{x}_2(t_j) \end{bmatrix} &\notag\\
	 t \in [t_j, t_{j+1}], \  j=0,1,2,\ldots. &
\end{align}
From \eqref{eq-x-hat} and \eqref{eq-x-hat-solution}, we have that the solution $\bar{x}(t)$
of system \eqref{eq-bar-x-rewrite} satisfies
\begin{align*}
	\bar{x}(t) = \left(P_{\sigma(t_j)} \otimes \mathbf{e}^{A(t-t_j)}+\bar{\Gamma}_{\sigma(t_j)} \mathbf{e}^{M_j(t-t_j)} \bar{\Gamma}_{\sigma(t_j)}^T \right) \bar{x}(t_j) &\notag\\
 t \in [t_j, t_{j+1}], \quad j=0,1,2,\ldots &
\end{align*}
where $P_{\sigma(t_j)}$ is defined in \eqref{eq-P-sigma-def}
and
\begin{align}\label{eq-bar-gamma}
	\bar{\Gamma}_{\sigma(t_j)} = \Gamma_{\sigma(t_j)} \otimes I_{n}.
\end{align}

Now, let
\begin{equation*}
  \tau_j = t_{j+1} - t_j, \quad  j =0,1,2,\ldots
\end{equation*}
which clearly satisfies  $\tau_j \ge \tau,\, \forall\, j =0,1,2,\ldots$.
Further, denote
\begin{align}\label{eq-Xi}
	\Xi_j(t) = \Psi_j(t) + \Phi_j(t) , \quad  t \in [0, \tau_j], \quad j=0,1,2,\ldots
\end{align}
where $\Psi_j(\cdot),\,\Phi_j(\cdot) \in \RR^{Nn \times Nn}$ are given as follows:
\begin{subequations}\label{eq-PsiPhi}
\begin{align}
	\Psi_j(t) &= P_{\sigma(t_j)} \otimes \mathbf{e}^{A  t} \label{eq-Psi}  \\
\Phi_j( t) &= \bar{\Gamma}_{\sigma(t_j)} \mathbf{e}^{M_j  t} \bar{\Gamma}_{\sigma(t_j)}^T \label{eq-Phi}.
\end{align}
\end{subequations}
Using the above notation, we have
\begin{equation*}
  \bar{x}(t_{j}) = \left( \prod_{r=0}^{j-1}\Xi_{r}(\tau_r)\right)\bar{x}(0), \quad j=0,1,2,\ldots.
\end{equation*}
Moreover, for any $t \ge 0$, there exists some nonnegative integer $j$
such that $t_j \le t < t_{j+1}$.
Thus, we have
\begin{align}\label{eq-bar-x-solution-Xi}
	\bar{x}(t) = \Xi_{j}(t - t_{j}) \left( \prod_{r=0}^{j-1}\Xi_{r}(\tau_r)\right)\bar{x}(0), \quad t\ge 0.
\end{align}

\subsection{Design of Feedback Gain Matrix $K$}

In this subsection, we present the design of the feedback gain matrix $K$
for the distributed control law \eqref{eq-control-law}.
In particular, the design of $K$ depends on a uniform lower bound for the nonzero
eigenvalues of the leader-follower matrix $\HH_{\sigma(t)}$ of the switching graph $\bar\GG_{\sigma(t)}$.

It is well known that the Laplacian of an undirected and connected graph
is symmetric, positive semi-definite, and has exactly one zero eigenvalue.
Moreover, an explicit lower bound for the nonzero eigenvalues of the Laplacian
can be obtained by the following lemma.

\begin{Lemma}\label{Lemma-Laplacian-Lower-Bound}
  Let $\LL_\mathrm{c} \in \RR^{N \times N}$
  be the Laplacian of an undirected and connected graph $\GG_\mathrm{c}$.
  Then the nonzero eigenvalues of $\LL_\mathrm{c}$
  are not less than
  \begin{equation*}
\lambda_{L} : =  \frac{4}{N(N-1)}>0.
\end{equation*}
\end{Lemma}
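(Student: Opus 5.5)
The approach is the variational (Courant--Fischer) characterization of the second smallest eigenvalue of $\LL_\mathrm{c}$, combined with a path argument along a shortest path in $\GG_\mathrm{c}$; it is essentially Mohar's bound $\lambda_2\ge 4/(N\,\mathrm{diam})$ with the diameter bounded by $N-1$. Since $\GG_\mathrm{c}$ is undirected and connected, $\LL_\mathrm{c}$ is symmetric positive semi-definite with kernel spanned by $\mathbf{1}_N$. It therefore suffices to show that the second smallest eigenvalue $\lambda_2$, the smallest nonzero one, is at least $\lambda_L$. Take a unit eigenvector $x=\mathrm{col}(x_1,\ldots,x_N)$ for $\lambda_2$. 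Then $\mathbf{1}_N^T x=0$, $\|x\|=1$, and
\begin{equation*}
\lambda_2 = x^T\LL_\mathrm{c}x=\sum_{\{i,j\}\in\mathcal{E}_\mathrm{c}}(x_i-x_j)^2 ,
\end{equation*}
where the sum runs over the undirected edges, each counted once with unit weight.

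First I lower-bound the spread of $x$. Let $a=\max_i x_i$ and $b=-\min_i x_i$. Because $\sum_i x_i=0$ and $x\neq 0$, we have $a>0$ and $b>0$. Each term $(a-x_i)(x_i+b)$ is nonnegative, so summing over $i$ gives
\begin{equation*}
0\le \sum_{i=1}^N (a-x_i)(x_i+b) = Nab+(a-b)\sum_i x_i-\sum_i x_i^2 = Nab-1 .
\end{equation*}
Hence $1\le Nab\le N(a+b)^2/4$, which yields $(a+b)^2\ge 4/N$. Here $a+b=x_u-x_v$, where $u$ and $v$ are indices attaining the maximum and the minimum of $x$, respectively.

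Next I transport this spread along a path. By connectivity there is a path $u=w_0,w_1,\ldots,w_d=v$ in $\GG_\mathrm{c}$ with distinct vertices, so $d\le N-1$. By Cauchy--Schwarz,
\begin{equation*}
(x_u-x_v)^2=\Big(\sum_{s=1}^{d}(x_{w_{s-1}}-x_{w_s})\Big)^2\le d\sum_{s=1}^{d}(x_{w_{s-1}}-x_{w_s})^2\le (N-1)\,\lambda_2 .
\end{equation*}
The last step uses the fact that the path edges are distinct edges of $\GG_\mathrm{c}$, so their squared differences form part of the edge sum above. Combining this with the spread bound gives $\lambda_2\ge 4/(N(N-1))=\lambda_L$, and every nonzero eigenvalue of $\LL_\mathrm{c}$ is at least $\lambda_2$.

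The main obstacle is getting the constant $4$ rather than a weaker one. The naive estimate $a+b\ge \max_i|x_i|\ge 1/\sqrt N$ only gives $1/(N(N-1))$. The Bhatia--Davis-type inequality above is what gives the sharp spread bound $4/N$.
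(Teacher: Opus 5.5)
Your argument is correct and takes essentially the paper's route: the paper gives no proof of its own and simply invokes Mohar's bound $\lambda_2\ge 4/(N\,\mathrm{diam}(\GG_\mathrm{c}))$ from \cite[Theorem 4.2]{Mohar-1991GC} together with $\mathrm{diam}(\GG_\mathrm{c})\le N-1$. Your Bhatia--Davis spread estimate combined with the Cauchy--Schwarz path bound is a valid self-contained derivation of exactly that inequality, implicitly for $N\ge 2$, where $\lambda_L$ is defined.
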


\begin{Remark}
Lemma \ref{Lemma-Laplacian-Lower-Bound} follows from \cite[Theorem 4.2]{Mohar-1991GC}
and is of critical use in \cite[Lemma 2]{WangZhuFeng2019TAC} to obtain a
uniform lower bound for the nonzero eigenvalues of
the Laplacian $\LL_{\sigma(t)}$ of the switching graph $\GG_{\sigma(t)}$.
In our case, we have to establish a uniform lower bound for the nonzero
eigenvalues of the leader-follower matrix $\HH_{\sigma(t)}$ of the switching graph $\bar\GG_{\sigma(t)}$.
To this end, we need to extend Lemma \ref{Lemma-Laplacian-Lower-Bound}
to the following form.
\end{Remark}

\begin{Lemma}\label{lemma-connected-H}
Consider the following matrix:
\begin{align*}
	\HH  =  \LL +\Delta  \in \RR^{N \times N}
\end{align*}
where $\LL \in \RR^{N \times N}$ is the Laplacian
of an undirected graph $\GG$
and $\Delta  \in \RR^{N \times N}$ is a $(0,1)$-diagonal matrix. 
Then the nonzero eigenvalues of $\HH$ are not less than
\begin{align}\label{eq-lower-bound-H}
 \lambda_{H}:= \frac{4}{N\left(N^2-N+4\right)} >0.
\end{align}
\end{Lemma}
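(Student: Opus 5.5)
The plan is to decompose $\HH$ along the connected components of $\GG$ and bound the smallest nonzero eigenvalue on each block separately. After a permutation of indices, $\LL$ and $\Delta$ are both block diagonal with respect to the components $\GG^{(1)},\dots,\GG^{(s)}$ of $\GG$. Hence $\HH=\mathrm{diag}\{\HH^{(1)},\dots,\HH^{(s)}\}$ with $\HH^{(l)}=\LL^{(l)}+\Delta^{(l)}$, and the nonzero eigenvalues of $\HH$ are exactly those of the blocks. Let $m\le N$ be the size of a component. Two cases arise.

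\emph{Case 1: $\Delta^{(l)}=\zero$.} Then $\HH^{(l)}$ is the Laplacian of a connected undirected graph on $m$ nodes. By Lemma~\ref{Lemma-Laplacian-Lower-Bound}, its nonzero eigenvalues are at least $4/(m(m-1))\ge 4/(N(N-1))$. It remains to check $4/(N(N-1))\ge \lambda_H$, i.e. $N^2-N+4\ge N-1$, which holds trivially. (If $m=1$, the block is $0$ and there is nothing to prove.)

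\emph{Case 2: $\Delta^{(l)}\neq\zero$.} Here I will show directly that $\HH^{(l)}$ is positive definite with $x^T\HH^{(l)}x\ge \frac{2}{m(m+1)}\|x\|^2$. The idea is to adjoin a virtual node $0$ joined to every node $i$ with $\Delta_{ii}=1$, which gives a connected graph on $m+1$ nodes, and to write
\begin{equation*}
x^T\HH^{(l)}x=\sum_{(u,v)\in\mathcal{E}^{(l)}}(x_u-x_v)^2+\sum_{i:\,\Delta_{ii}=1}x_i^2=:S,
\end{equation*}
where each undirected edge is counted once; this is the Laplacian quadratic form of the augmented graph with $x_0=0$.

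For each node $i$, pick a shortest path $0=v_0,v_1,\dots,v_{d_i}=i$ in the augmented graph. Telescoping and Cauchy--Schwarz give $x_i^2=\big(\sum_{k}(x_{v_{k+1}}-x_{v_k})\big)^2\le d_i\,S$. Summing over $i$ yields $\|x\|^2\le S\sum_i d_i$.

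The key combinatorial step is the bound $\sum_{i=1}^m d_i\le 1+2+\cdots+m=m(m+1)/2$. It follows because the BFS layers from node $0$ at distances $1,2,\dots,\max_i d_i$ are all nonempty, so sorting the distances gives the $k$-th smallest distance at most $k$. Hence every eigenvalue of $\HH^{(l)}$ is at least $2/(m(m+1))\ge 2/(N(N+1))$.

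Finally, I would verify that $2/(N(N+1))\ge 4/(N(N^2-N+4))$. This is equivalent to $N^2-N+4\ge 2N+2$, i.e. $(N-1)(N-2)\ge0$, which holds for every integer $N\ge1$. Combining the two cases proves the lemma. I expect the main obstacle to be Case 2. A cruder route, such as interlacing with the augmented Laplacian and invoking Lemma~\ref{Lemma-Laplacian-Lower-Bound} on $m+1$ nodes, or using only $\max_i x_i^2\ge 1/m$, gives bounds like $4/(N(N+1)^2)$ or $1/N^2$. These fall below $\lambda_H$ for some $N$, which is why I would use the summed-distance estimate above.
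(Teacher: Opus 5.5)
Your proof is correct, and it takes a genuinely different route from the paper's in the key step. The overall skeleton is shared: both arguments split $\HH$ along connected components, and both handle a component with $\Delta^{(l)}=\zero$ by Lemma~\ref{Lemma-Laplacian-Lower-Bound}.

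The difference lies in the grounded case.
\begin{itemize}
\item The paper first reduces, by monotonicity, to a single $1$ on the diagonal. It then writes a unit vector as $v=c_1\mathbf{1}_N/\sqrt{N}+c_2\zeta$ with $\zeta\perp\mathbf{1}_N$ and uses Mohar's bound $\zeta^T\LL_\mathrm{c}\zeta\ge\lambda_L$. Finally it minimizes the resulting quadratic in $\zeta_1$ over $|\zeta_1|\le\sqrt{(N-1)/N}$ through a $2\times 2$ eigenvalue estimate, which yields exactly $\lambda_L/(N(\lambda_L+1))=\lambda_H$.
\item You instead ground the component at a virtual node. You then combine a Poincar\'e-type estimate (telescoping plus Cauchy--Schwarz along shortest paths) with the BFS counting $\sum_i d_i\le m(m+1)/2$. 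This needs no spectral input and treats every nonzero $\Delta^{(l)}$ at once.
\end{itemize}

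What your route buys is a strictly sharper result. It shows that all nonzero eigenvalues of $\HH$ are at least $2/(N(N+1))$, which is of order $1/N^2$. That order is optimal: a path with the leader attached at one end has smallest eigenvalue $2-2\cos\bigl(\pi/(2N+1)\bigr)\approx \pi^2/(4N^2)$. By contrast, $\lambda_H$ is of order $1/N^3$, because the paper bounds $\zeta^T\LL_\mathrm{c}\zeta$ separately from $\zeta_1$ and thereby loses a factor of $N$. Since the bound enters the design only through $\mu\ge 1/\lambda_H$ in \eqref{eq-K}, your estimate would permit the smaller gain $\mu\ge N(N+1)/2$.

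The paper's route has the modest advantage of building directly on the known Laplacian bound with only an elementary $2\times2$ computation. However, it is the less tight of the two arguments.
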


\begin{Proof}
If $N = 1$, then the lower bound \eqref{eq-lower-bound-H} holds trivially,
since $\HH = \Delta$ does not have nonzero eigenvalues when $\Delta = 0$ 
and the eigenvalue of $\HH$ is equal to $\frac{4}{N\left(N^2-N+4\right)} = 1$ when $\Delta = 1$. 

For  $N >1$, we divide the proof into two steps.

{\bf Step 1:} In this step, we assume that the graph $\mathcal{G}$ is connected
as $\mathcal{G}_c$ and $\LL=\LL_\mathrm{c}$.
If  $\Delta = \mathbf{0}_{N \times N}$, 
then the result follows from Lemma \ref{Lemma-Laplacian-Lower-Bound}
by noting that
\begin{equation*}
\frac{4}{N(N-1)} >\frac{4}{N\left(N^2-N+4\right)}.
\end{equation*}

Next, assume $\Delta =\Delta_{\mathrm{c}}$ 
is a $(0,1)$-diagonal matrix with only one $1$ on its diagonal.
Note that any unit vector $v \in \RR^{N}$
can be written as
\begin{align}\label{eq-eta}
	v = c_1\frac{\mathbf{1}_{N}}{\sqrt{N}} + c_2\zeta
\end{align}
where $\zeta \in \RR^{N}$ is a unit vector that satisfies $\mathbf{1}_{N}^T \zeta = 0$, and $c_{1}, c_{2}$ are constants such that
$c_1^2 + c_2^2 = 1$.
Since the graph $\GG_\mathrm{c}$ is connected,
the Laplacian $\LL_\mathrm{c}$ has exactly one zero eigenvalue.
It follows from Lemma \ref{Lemma-Laplacian-Lower-Bound} that
\begin{align}\label{eq-lambda2}
	\zeta^T \LL_\mathrm{c}  \zeta \ge \lambda_{L}.
\end{align}

To complete the proof of this step, we only need
to evaluate a lower bound for the following quadratic form:
\begin{align}\label{eq-f}
	f := v^T \HH  v
\end{align}
where $\HH = \LL_\mathrm{c} + \Delta_\mathrm{c}$.
For this purpose, let
\begin{align}\label{eq-xi-part}
	\zeta = \mathrm{col}\left(\zeta_1, \ldots, \zeta_N\right), \quad \zeta_i \in \RR, \quad i=1,2,\ldots,N
\end{align}
and, for simplicity, assume
\begin{equation}\label{eq-Delta-o-def}
  \Delta_\mathrm{c} =\begin{bmatrix}
                    1 & \zero \\
                    \zero & \zero
                  \end{bmatrix} \in \RR^{N \times N}.
\end{equation}
From the definition of $\zeta$, we have
\begin{align*}
	\zeta_1 + \zeta_2 +\cdots + \zeta_N &= 0 \\
	\zeta_1^2 + \zeta_2^2 + \cdots + \zeta_N^2 &= 1.
\end{align*}
Then, it follows that
\begin{align*}
  \zeta_{1}^{2} & = (\zeta_{2}+\cdots+\zeta_{N})^{2}\\
   & \le (N-1)\left( \zeta_{2}^{2}+\cdots+\zeta_{N}^{2}\right)\\
   & = (N-1)(1-\zeta_{1}^{2}).
\end{align*}
Thus, we can obtain
\begin{align}\label{eq-xi-1-range}
	-\sqrt{\frac{N-1}{N}} \le \zeta_1 \le \sqrt{\frac{N-1}{N}}.
\end{align}

From  \eqref{eq-eta} to \eqref{eq-Delta-o-def},
we have
\begin{align*}
	f &= \left(c_1\frac{\mathbf{1}_{N}}{\sqrt{N}} + c_2\zeta \right)^T \left(\LL_\mathrm{c} +\Delta_\mathrm{c} \right) \left(c_1\frac{\mathbf{1}_{N}}{\sqrt{N}} + c_2\zeta \right) \notag\\
	&\ge  c_2^2 \zeta_1^2 + \frac{2 c_1 c_2 }{\sqrt{N}}\zeta_1 + \frac{c_{1}^{2}}{N} + c_2^2 \lambda_{L} =: g(\zeta_{1}).
\end{align*}
To obtain a lower bound for the above function $g(\zeta_{1})$ and hence,
a lower bound for $f$, consider three different cases.

\emph{Case 1:} If $c_{2}=0$, then
\begin{equation}\label{eq-f-bound-1}
  f \ge g(\zeta_{1})=\frac{c_{1}^{2}}{N}=\frac{1}{N}.
\end{equation}

If $c_{2} \ne 0$, the minimum of $g(\zeta_{1})$
would be attained when $\zeta_{1}=-\frac{c_{1}}{\sqrt{N}c_{2}}$.
However, in light of the range of $\zeta_{1}$ given in \eqref{eq-xi-1-range},
we need to further distinguish the following two cases.

\emph{Case 2:} If $\left|-\frac{c_{1}}{\sqrt{N}c_{2}} \right|\le \sqrt{\frac{N-1}{N}}$,
then
\begin{equation*}
     f  \ge g(\zeta_{1}) \ge g\left(-\frac{c_{1}}{\sqrt{N}c_{2}}\right)=c_{2}^{2}\lambda_{L}. \\
\end{equation*}
Since $\left|-\frac{c_{1}}{\sqrt{N}c_{2}} \right|\le \sqrt{\frac{N-1}{N}}$ also implies that
$c_{2}^{2}\ge \frac{1}{N}$, we have
\begin{equation}\label{eq-f-bound-2}
     f  \ge g(\zeta_{1}) \ge  \frac{\lambda_{L}}{N}. \\
\end{equation}

\emph{Case 3:} If $\left|-\frac{c_{1}}{\sqrt{N}c_{2}}\right| >\sqrt{\frac{N-1}{N}}$,
then
\begin{align}\label{eq-f-bound-3}
     f  \ge g(\zeta_{1}) & \ge g\left(\pm\sqrt{\frac{N-1}{N}}\right) \notag\\
     & = \frac{1}{N}c_{1}^{2} -\frac{2\sqrt{N-1}}{N}c_{1}c_{2}+ \left(\frac{N-1}{N}+\lambda_{L}\right)c_{2}^{2} \notag\\
     &=\begin{bmatrix}
         c_{1} & c_{2}
       \end{bmatrix}\begin{bmatrix}
                      \frac{1}{N} & \pm\frac{\sqrt{N-1}}{N} \notag\\
                       \pm\frac{\sqrt{N-1}}{N} &  \frac{N-1}{N}+\lambda_{L}
                    \end{bmatrix} \begin{bmatrix}
                                    c_{1} \\
                                    c_{2}
                                  \end{bmatrix} \notag\\
       & \ge \min\left\{ \frac{\lambda_{L} + 1 \pm \sqrt{(\lambda_{L}+1)^2 - \frac{4\lambda_{L}}{N}}}{2}\right\} \notag\\
		& = \frac{\lambda_{L} + 1 - \sqrt{(\lambda_{L}+1)^2 - \frac{4\lambda_{L}}{N}}}{2}
		\ge \frac{\lambda_{L}}{N(\lambda_{L}+1)}.
\end{align}

By combining \eqref{eq-f-bound-1}, \eqref{eq-f-bound-2}, and \eqref{eq-f-bound-3},
we obtain a lower bound for $f$ as follows:
\begin{align*}
	f &\ge \min\left\{\frac{1}{N}, \frac{\lambda_{L}}{N}, \frac{\lambda_{L}}{N(\lambda_{L}+1)}\right\} \\
	&= \frac{\lambda_{L}}{N(\lambda_{L}+1)} = \frac{4}{N(N^2-N+4)}
\end{align*}
which is the expression given by \eqref{eq-lower-bound-H}.

When $\Delta_\mathrm{c}$ is any other $(0,1)$ diagonal matrix with only one $1$ on its diagonal, the lower bound \eqref{eq-lower-bound-H}
can be similarly derived.

Now let $\Delta $ be any $(0,1)$ diagonal matrix with multiple $1$'s on its diagonal.
Since $\LL_\mathrm{c} +\Delta \ge \LL_\mathrm{c} +\Delta_\mathrm{c}$, 
the lower bound \eqref{eq-lower-bound-H} also holds.

{\bf Step 2:} In this step, we consider a disconnected graph $\GG$. 
By relabeling the nodes of $\GG$, we can partition its Laplacian   $\LL$
into the following block diagonal form:
\begin{align*}
\LL  = \mathrm{block\,diag}\left\{\LL_{1}, \ldots, \LL_{q}, 0, \ldots, 0 \right\}
\end{align*}
where each $0$ corresponds to an isolated node,
and each $\LL_i,\, i=1,\ldots,q$,
corresponds to a connected subgraph of the graph $\GG$ 
with $N_i$ nodes where $1 \leq N_i \leq N$.
By Step 1, for any  $(0,1)$-diagonal matrix $\Delta_i$ of dimension $N_i$, 
the nonzero eigenvalues of $\LL_i + \Delta_i$ are not less than 
$\lambda_{Hi} = \frac{4}{N_i \left(N^2_i -N_i+4\right)}$. 

Since the value of $\frac{4}{N\left(N^2-N+4\right)}$ decreases as $N$ increases
and $\frac{4}{N\left(N^2-N+4\right)}\le 1$,
the lower bound \eqref{eq-lower-bound-H} also holds for a disconnected graph $\GG$.

The overall proof is thus complete.
\end{Proof}

Let us consider the switching leader-follower matrix $\HH_{\sigma(t)}$ over $[0,\infty)$.  
Since, for any $t \geq 0$, 
\begin{equation*}
 \HH_{\sigma(t)} = \LL_{\sigma(t)}   +\Delta_{\sigma(t)} \in \RR^{N \times N}
\end{equation*}
and $\Delta_{\sigma(t)}\in \RR^{N \times N}$ is some $(0,1)$-diagonal matrix, Lemma \ref{lemma-connected-H} leads to a uniform lower bound for the
nonzero eigenvalues of $\HH_{\sigma(t)}$ for all $t\ge 0$, as stated below.

\begin{Lemma}\label{lemma-general-H}
Under Assumption \ref{Ass-undirected},
the nonzero eigenvalues of the leader-follower matrix $\HH_{\sigma(t)} \in \RR^{N \times N}$
of the switching graph $\bar\GG_{\sigma(t)}$
are not less than
\begin{align*}
 \lambda_{H}= \frac{4}{N\left(N^2-N+4\right)}
\end{align*}
for all $t\ge 0$.
\end{Lemma}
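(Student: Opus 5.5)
This is a direct corollary of Lemma~\ref{lemma-connected-H}, applied pointwise in time; no new estimate is needed. Fix an arbitrary $t \ge 0$. On the interval $[t_j, t_{j+1})$ containing $t$, the matrix $\HH_{\sigma(t)}$ is frozen at $\HH_{\sigma(t_j)}$, and by \eqref{eq-H-sigma-def} it equals $\LL_{\sigma(t_j)} + \Delta_{\sigma(t_j)}$. I will check that this pair meets the hypotheses of Lemma~\ref{lemma-connected-H}.

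\emph{First hypothesis.} $\LL_{\sigma(t_j)}$ must be the Laplacian of an undirected graph on $N$ nodes. This holds because, by Assumption~\ref{Ass-undirected}, the subgraph $\GG_{\sigma(t_j)}$ is undirected. Lemma~\ref{lemma-connected-H} places no connectivity requirement on $\GG$: Step~2 of its proof already handles disconnected graphs, including isolated nodes.

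\emph{Second hypothesis.} $\Delta_{\sigma(t_j)}$ must be a $(0,1)$-diagonal matrix. By definition it is $\mathrm{diag}\{a_{10}(t),\ldots,a_{N0}(t)\}$, and the weights were fixed as $a_{i0}(t)\in\{0,1\}$. So this holds as well.

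Lemma~\ref{lemma-connected-H} then gives that every nonzero eigenvalue of $\HH_{\sigma(t)}$ is at least $\frac{4}{N(N^2-N+4)}$.

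The bound $\lambda_H$ depends only on $N$. It does not depend on $j$, on the particular edge set, or on which followers currently hear the leader. Since $t$ was arbitrary, the bound holds uniformly for all $t\ge 0$, which is the claim.

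There is no substantive obstacle. The only point requiring care is to confirm that Lemma~\ref{lemma-connected-H} does not need $\GG_{\sigma(t)}$ to be connected at each instant; this matters because Assumption~\ref{Ass-connected} guarantees connectivity only jointly over intervals. As noted, Step~2 of that lemma covers the disconnected case, so the argument goes through without using Assumption~\ref{Ass-connected} at all.
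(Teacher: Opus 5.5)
Your proposal is correct and matches the paper's argument. The paper likewise obtains this lemma by observing that, for each $t\ge 0$, $\HH_{\sigma(t)}=\LL_{\sigma(t)}+\Delta_{\sigma(t)}$, where $\LL_{\sigma(t)}$ is the Laplacian of an undirected (possibly disconnected) graph and $\Delta_{\sigma(t)}$ is a $(0,1)$-diagonal matrix, so Lemma~\ref{lemma-connected-H} applies pointwise and gives a bound depending only on $N$; your remark that Step~2 of that lemma handles the instantaneously disconnected case is exactly what makes the bound uniform without invoking Assumption~\ref{Ass-connected}.
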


Now, with the establishment of Lemma \ref{lemma-general-H},
we are ready to present the design of the feedback gain matrix $K$
for the distributed control law \eqref{eq-control-law}.

Similar to the design in \cite{WangZhuFeng2019TAC},
given any $\alpha > 0$ and  $t^* > 0$, we define a weighted controllability Gramian as follows:
\begin{align}\label{eq-Gramian}
	W_\mathrm{c}(\alpha, t^*): = \int_0^{t^*} \mathbf{e}^{-\alpha t} \cdot \mathbf{e}^{-\frac{A}{2} t} B B^T \mathbf{e}^{-\frac{A^T}{2} t} dt \in \RR^{n \times n}.
\end{align}
Under Assumption \ref{Ass-stabilizable}, the weighted controllability Gramian $W_\mathrm{c}(\alpha, t^{*})$ is positive definite.
Then, we design the feedback gain matrix $K$ as
\begin{align}\label{eq-K}
	K = \mu B^T W_\mathrm{c}^{-1}(\alpha, t^*) \in \RR^{m \times n}
\end{align}
where  $\mu\ge\frac{1}{\lambda_{H}}$
and $\lambda_{H}>0$ is defined in \eqref{eq-lower-bound-H}.

\subsection{Proof of Theorem \ref{Theorem}}

As we have noted in Section \ref{Section-Problem},
Problem \ref{Problem} is solvable if it can be shown that the solution $\bar{x}(t)$
of system \eqref{eq-der-L0-otimes-x} satisfies
$\lim_{t \to\infty}\bar{x}(t)=0$ exponentially.
Moreover, we have derived in Subsection \ref{Subsection-Problem-Conversion}
 the closed-form expression of the solution $\bar{x}(t)$.
Before proceeding to the proof of Theorem \ref{Theorem},
we present two more technical lemmas regarding the closed-form expression of the solution $\bar{x}(t)$ in \eqref{eq-bar-x-solution-Xi}.

\begin{Lemma}\label{lemma-Phi}
Consider $\Phi_j(t)$ in \eqref{eq-Phi}.
Under Assumptions~\ref{Ass-stabilizable} and \ref{Ass-undirected}, 
given any $\alpha>0$ and $t^* > 0$,  
let the feedback gain matrix $K$ be given by \eqref{eq-K} and denote the maximum and the minimum eigenvalue of $W_\mathrm{c}(0, t^*)$ by $\lambda_M$ and $\lambda_m$, respectively.
Then, we have
$$\left\|\Phi_j( t)\right\| \leq C_0(t^*) \mathbf{e}^{-\alpha   t}, \ \forall\,   t \in [0, \tau_j],\ \forall\, j =0,1,2,\ldots $$
where $C_0(t^*) = \sqrt{\frac{\lambda_M}{\lambda_m}}\mathbf{e}^{\frac{\alpha}{2}t^*} \ge 1$.
\end{Lemma}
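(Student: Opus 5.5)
Since $\bar\Gamma_{\sigma(t_j)}=\Gamma_{\sigma(t_j)}\otimes I_n$ has orthonormal columns, we have $\|\Phi_j(t)\|=\|\mathbf{e}^{M_jt}\|$. Because $M_j$ is block diagonal with blocks $A-\lambda BK$, $\lambda\in\{\lambda_{n_j+1}^{(j)},\ldots,\lambda_N^{(j)}\}$, it suffices to prove the uniform bound
\begin{equation*}
\left\|\mathbf{e}^{(A-\lambda BK)t}\right\|\le C_0(t^*)\mathbf{e}^{-\alpha t},\quad \forall\, t\ge 0,\ \forall\, \lambda\ge\lambda_H .
\end{equation*}
By Lemma~\ref{lemma-general-H}, every nonzero eigenvalue of $\HH_{\sigma(t_j)}$ is at least $\lambda_H$. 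With $\mu\ge 1/\lambda_H$, we get $\lambda\mu\ge 1$ for every block.

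The core step is a Lyapunov argument using the Gramian. Write $W:=W_\mathrm{c}(\alpha,t^*)$. Integrating $\frac{d}{dt}\big(\mathbf{e}^{-\alpha t}\mathbf{e}^{-\frac{A}{2}t}BB^T\mathbf{e}^{-\frac{A^T}{2}t}\big)$ over $[0,t^*]$ gives the identity
\begin{equation*}
-\left(\alpha I+\tfrac{A}{2}\right)W-W\left(\alpha I+\tfrac{A}{2}\right)^T=\mathbf{e}^{-\alpha t^*}\mathbf{e}^{-\frac{A}{2}t^*}BB^T\mathbf{e}^{-\frac{A^T}{2}t^*}-BB^T .
\end{equation*}
Hence $AW+WA^T-2BB^T\le -2\alpha W-BB^T$, with the $\tfrac{1}{2}$ scaling tracked carefully. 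Set $V(x)=x^TW^{-1}x$ and $A_\lambda=A-\lambda\mu BB^TW^{-1}$. Multiplying the Lyapunov expression for $A_\lambda$ by $W$ on both sides, we obtain
\begin{equation*}
WA_\lambda^TW^{-1}\cdot W+\ldots = AW+WA^T-2\lambda\mu BB^T\le AW+WA^T-2BB^T,
\end{equation*}
where the last inequality uses $\lambda\mu\ge1$. Combined with the identity, this yields $A_\lambda^TW^{-1}+W^{-1}A_\lambda\le-2\alpha W^{-1}$, i.e.\ $\dot V\le-2\alpha V$ along $\dot x=A_\lambda x$. I expect the main obstacle to be getting the constants in this Gramian identity right. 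With the $-A/2$ in the exponent and the weight $\mathbf{e}^{-\alpha t}$, the decay rate may come out as exactly $\alpha$ or as $\alpha$ times a factor, so I would compute the derivative carefully and adjust. If the rate comes out weaker, the fallback is to use the freedom $\lambda\mu\ge1$ together with $BB^T\ge0$ to absorb the extra terms.

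From $V(x(t))\le\mathbf{e}^{-2\alpha t}V(x(0))$ we get
\begin{equation*}
\|\mathbf{e}^{A_\lambda t}\|\le\sqrt{\lambda_{\max}(W)/\lambda_{\min}(W)}\;\mathbf{e}^{-\alpha t}.
\end{equation*}
It remains to bound the condition number of $W$ by that of $W_\mathrm{c}(0,t^*)$. Since $\mathbf{e}^{-\alpha t^*}\le\mathbf{e}^{-\alpha t}\le1$ on $[0,t^*]$, we have $\mathbf{e}^{-\alpha t^*}W_\mathrm{c}(0,t^*)\le W\le W_\mathrm{c}(0,t^*)$. Therefore $\lambda_{\max}(W)\le\lambda_M$ and $\lambda_{\min}(W)\ge\mathbf{e}^{-\alpha t^*}\lambda_m$, which gives the constant $C_0(t^*)=\sqrt{\lambda_M/\lambda_m}\,\mathbf{e}^{\frac{\alpha}{2}t^*}\ge1$. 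This bound is uniform in $\lambda$ and hence in $j$, and applying it blockwise to $\mathbf{e}^{M_jt}$ for $t\in[0,\tau_j]$ completes the argument.
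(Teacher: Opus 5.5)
Your proof follows the paper's route---reduce $\|\Phi_j(t)\|$ to $\max_p\|\mathbf{e}^{(A-\lambda_p^{(j)}BK)t}\|$, invoke Lemma~\ref{lemma-general-H} with $\mu\ge 1/\lambda_H$, and apply a uniform bound for $\lambda\ge\lambda_H$. The only difference is that the paper cites \cite[Lemma~3]{WangZhuFeng2019TAC} for that bound, whereas you derive it with the Gramian Lyapunov function, which makes the argument self-contained.

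Your displayed Gramian identity has the wrong coefficient on $\alpha$. Integrating the derivative of the integrand gives $-\alpha W-\tfrac{1}{2}(AW+WA^T)=E-BB^T$, where $E:=\mathbf{e}^{-\alpha t^*}\mathbf{e}^{-\frac{A}{2}t^*}BB^T\mathbf{e}^{-\frac{A^T}{2}t^*}\ge 0$. Hence $AW+WA^T-2BB^T=-2\alpha W-2E\le-2\alpha W$. Your extra $-BB^T$ term does not hold in general, but it is not needed. This gives $A_\lambda W+WA_\lambda^T\le-2\alpha W$, so the decay rate is exactly $\alpha$ and no fallback is required. Your sandwich $\mathbf{e}^{-\alpha t^*}W_\mathrm{c}(0,t^*)\le W\le W_\mathrm{c}(0,t^*)$ then yields precisely $C_0(t^*)$.
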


\begin{Proof}
Let us first rephrase  \cite[Lemma 3]{WangZhuFeng2019TAC} as follows: 
Under Assumption \ref{Ass-stabilizable}, 
for any $\lambda \geq \lambda_H$,  let
$K$ be given by \eqref{eq-K}. 
Then,
\begin{equation}\label{lemma3of2019TAC}
  \left\| \mathbf{e}^{\left(A - \lambda BK\right)t} \right\| \leq C_0(t^*) \mathbf{e}^{-\alpha t}, \quad \forall\, t \ge 0.
\end{equation}

By Lemma \ref{lemma-general-H}, 
under Assumption \ref{Ass-undirected}, 
we have $\lambda_p^{(j)} \ge \lambda_H$, 
for all $p = n_j+1, \ldots, N$, and all $j=0,1,2,\ldots$.
Thus, it follows from  \eqref{lemma3of2019TAC} that
\begin{equation*}
  \left\| \mathbf{e}^{\left(A - \lambda_p^{(j)} BK\right)t} \right\| \leq C_0(t^*) \mathbf{e}^{-\alpha t}, \quad \forall\, t \ge 0.
\end{equation*}

From the definitions of $M_j$ in \eqref{eq-M}, we have
\begin{align*}
	\left\|\mathbf{e}^{M_j t}\right\| \leq C_0(t^*) \mathbf{e}^{-\alpha t}, \quad \forall\, t \ge 0, \quad \forall\, j =0,1,2,\ldots.
\end{align*}
Consider $\Phi_j(t)$ in \eqref{eq-Phi}. We have
\begin{align*}
	\left\|\Phi_j(t)\right\| &\le \left\|\bar{\Gamma}_{\sigma(t_j)}\right\| \left\|\mathbf{e}^{M_j t}\right\| \left\|\bar{\Gamma}_{\sigma(t_j)}^T\right\| \\
	&\le C_0(t^*) \mathbf{e}^{-\alpha t}, \quad \forall\,  t \in [0, \tau_j], \ \forall\, j =0,1,2,\ldots.
\end{align*}
The proof is thus complete.
\end{Proof}

To continue, under the condition \eqref{eq-restriction-A} in Theorem \ref{Theorem},
we let $\lambda^*>0$ be any fixed number satisfying
\begin{equation}\label{eq-lambda-*-selection}
  \lambda_{\max}(A) < \lambda^* < - \frac{\ln \delta}{T_\mathrm{c}}.
\end{equation}
Then, it is clear that there exists some $C_1\ge 1$ such that
\begin{align}\label{eq-expA}
	\left\| \mathbf{e}^{A  t} \right\| \le C_1 \mathbf{e}^{\lambda^*  t}, \quad \forall\,  t \ge 0.
\end{align}

\begin{Lemma}\label{lemma-ratio}
Consider $\Xi_j(t)$ in \eqref{eq-Xi}.
Under Assumptions \ref{Ass-stabilizable} to \ref{Ass-connected},
and the condition \eqref{eq-restriction-A},
given any $t^* > 0$,
there exist a positive integer $\ell$ and a corresponding positive number $\alpha(\ell, t^*)$
such that,
if the feedback gain matrix $K$ is designed according to \eqref{eq-K}
with $\alpha > \alpha(\ell, t^*)$,
then
\begin{align}\label{eq-lemma-ratio}
	\left\|\prod_{r=j_k}^{j_{k+\ell}-1}\Xi_{r}(\tau_r)\right\| \le \rho, \qquad \forall\, k =0,1,2,\ldots
\end{align}
holds for some $\rho \in (0, 1)$.
\end{Lemma}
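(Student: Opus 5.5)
The idea is to expand the product $\prod_{r=j_k}^{j_{k+\ell}-1}\Xi_r(\tau_r)$ with $\Xi_r=\Psi_r+\Phi_r$. There is one ``pure'' term, $\prod\Psi_r(\tau_r)$, and every other term contains at least one factor $\Phi_r$.

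\textbf{The pure term.} By the mixed-product property of the Kronecker product,
\[
\prod_{r=j_k}^{j_{k+\ell}-1}\Psi_r(\tau_r)=\Big(\prod_{r=j_k}^{j_{k+\ell}-1}P_{\sigma(t_r)}\Big)\otimes \mathbf{e}^{A(t_{j_{k+\ell}}-t_{j_k})}.
\]
Group the projection product into the $\ell$ consecutive blocks $[j_{k+i},j_{k+i+1}-1]$. Lemma~\ref{lemma-connect-graph} bounds each block by $\delta$, so the projection product has norm at most $\delta^\ell$. Since $t_{j_{k+\ell}}-t_{j_k}\le \ell T_\mathrm{c}$, the bound \eqref{eq-expA} gives
\[
\Big\|\prod\Psi_r\Big\|\le C_1\delta^\ell\mathbf{e}^{\lambda^*\ell T_\mathrm{c}}=C_1\big(\delta\mathbf{e}^{\lambda^*T_\mathrm{c}}\big)^\ell .
\]
By \eqref{eq-lambda-*-selection}, $\delta\mathbf{e}^{\lambda^*T_\mathrm{c}}<1$. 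We fix $\ell$ large enough that $C_1(\delta\mathbf{e}^{\lambda^*T_\mathrm{c}})^\ell\le 1/4$; this choice of $\ell$ is independent of $\alpha$, because $C_1$ and $\lambda^*$ depend only on $A$.

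\textbf{The mixed terms.} The number of factors is $L_k:=j_{k+\ell}-j_k\le \ell\lceil T_\mathrm{c}/\tau\rceil=:\bar L$, so there are at most $2^{\bar L}$ mixed terms. For a fixed mixed term, each $\Psi_r$ factor contributes at most $\|\mathbf{e}^{A\tau_r}\|\le C_1\mathbf{e}^{\lambda^*\tau_r}$ by \eqref{eq-P-sigma-P-norm}. Each $\Phi_r$ factor contributes at most $C_0(t^*)\mathbf{e}^{-\alpha\tau_r}\le C_0(t^*)\mathbf{e}^{-\alpha\tau}$ by Lemma~\ref{lemma-Phi}.

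\textbf{Main obstacle.} The difficulty is that the constant $C_0(t^*)=\sqrt{\lambda_M/\lambda_m}\,\mathbf{e}^{\alpha t^*/2}$ itself grows with $\alpha$. The decay must therefore beat it, and only one $\Phi$ factor is guaranteed in a mixed term. Two points need checking.

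First, $\lambda_M,\lambda_m$ are eigenvalues of $W_\mathrm{c}(0,t^*)$ and hence do not depend on $\alpha$.

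Second, taking $t^*\le\tau$ (or otherwise reducing to this case), each $\Phi_r$ factor is bounded by $\sqrt{\lambda_M/\lambda_m}\,\mathbf{e}^{\alpha(t^*/2-\tau_r)}\le\sqrt{\lambda_M/\lambda_m}\,\mathbf{e}^{-\alpha\tau/2}$, which tends to zero as $\alpha\to\infty$. If $t^*>\tau$ is allowed, one must instead use a sharper form of \eqref{lemma3of2019TAC}, namely $\|\mathbf{e}^{(A-\lambda BK)t}\|\le\sqrt{\lambda_M/\lambda_m}\,\mathbf{e}^{\alpha t^*/2}\mathbf{e}^{-\alpha t}$. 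For $2\tau>t^*$ this still gives a factor $\mathbf{e}^{-\alpha(\tau-t^*/2)}\to 0$. Otherwise one has to pair consecutive intervals, or alternatively choose the threshold $\alpha(\ell,t^*)$ only after noting that the $\Phi$-bound depends on $\alpha$ through $\mathbf{e}^{-\alpha(\tau-t^*/2)}$. I expect settling this exact interplay between $t^*$ and $\tau$ to be the delicate step.

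\textbf{Conclusion.} With $\ell$ fixed, each mixed term is bounded by
\[
\big(\max\{1,\sqrt{\lambda_M/\lambda_m}\}\big)^{\bar L}\,C_1^{\bar L}\,\mathbf{e}^{\lambda^*\bar L T_\mathrm{c}}\cdot \epsilon(\alpha),
\]
where $\epsilon(\alpha)\to 0$ as $\alpha\to\infty$. Summing the at most $2^{\bar L}$ mixed terms, we choose $\alpha(\ell,t^*)$ so that their total is at most $1/4$ whenever $\alpha>\alpha(\ell,t^*)$. This yields \eqref{eq-lemma-ratio} with $\rho=1/2$, uniformly in $k$, since all the bounds above depend only on $\ell$, $\tau$, $T_\mathrm{c}$ and not on $k$.
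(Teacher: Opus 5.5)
Your route is the paper's: bound $\prod\Psi_r(\tau_r)$ by $C_1(\delta\mathbf{e}^{\lambda^*T_\mathrm{c}})^\ell$ via Lemma~\ref{lemma-connect-graph}, fix $\ell$ independently of $\alpha$, then make every term containing a $\Phi$ factor small by enlarging $\alpha$. The paper uses a telescoping sum instead of the full expansion, which makes no difference.

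The gap is the case $t^*\ge 2\tau$, which you flag but do not close. There the only available bound, $\|\Phi_r(\tau_r)\|\le\sqrt{\lambda_M/\lambda_m}\,\mathbf{e}^{\alpha(t^*/2-\tau_r)}$, gives no decay as $\alpha\to\infty$, so $\epsilon(\alpha)\to 0$ is unjustified. None of your workarounds closes it. You may not shrink $t^*$, since it is given. Pairing intervals fails because a mixed term can contain a single isolated $\Phi$ factor. Re-choosing the threshold does not change the sign of $\tau-t^*/2$. Your ``sharper form'' is just \eqref{lemma3of2019TAC} with $C_0(t^*)$ written out.

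The paper's proof passes over exactly this point, in a stronger form. Its $C_3(t^*)$ contains $(C_0(t^*)+C_1)^{\lceil T_\mathrm{c}/\tau\rceil\ell}$, so it grows like $\mathbf{e}^{\alpha t^*\lceil T_\mathrm{c}/\tau\rceil\ell/2}$. The step ``choose $\alpha$ large so that $C_3(t^*)\mathbf{e}^{-(\alpha+\lambda^*)\tau}$ is small'', and the formula for $\alpha(\ell,t^*)$, are therefore circular unless $t^*<2\tau/(\lceil T_\mathrm{c}/\tau\rceil\ell)$. Your diagnosis is sharper than the paper's, but neither argument covers arbitrary $t^*$.

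The missing idea is that the $\alpha$-growth of the transient constant can be confined to an arbitrarily short horizon. The constant in \eqref{lemma3of2019TAC} is $\sqrt{\lambda_{\max}(W)/\lambda_{\min}(W)}$ with $W=W_\mathrm{c}(\alpha,t^*)$. The reason is that for $\lambda\mu\ge 1$ one has $(A-\lambda BK)W+W(A-\lambda BK)^T\preceq-2\alpha W$, so $x^TW^{-1}x$ decays at rate $2\alpha$. Since the integrand in \eqref{eq-Gramian} is positive semidefinite, for any $t'\in(0,t^*]$
\[
\mathbf{e}^{-\alpha t'}W_\mathrm{c}(0,t')\preceq W_\mathrm{c}(\alpha,t^*)\preceq W_\mathrm{c}(0,t^*),
\]
and $W_\mathrm{c}(0,t')\succ 0$ by controllability. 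Take $t'=\min\{t^*,\tau\}$. Then $\|\Phi_r(\tau_r)\|\le c\,\mathbf{e}^{-\alpha(\tau_r-t'/2)}\le c\,\mathbf{e}^{-\alpha\tau/2}$, where $c=\sqrt{\lambda_M/\lambda_{\min}(W_\mathrm{c}(0,t'))}$ does not depend on $\alpha$. Every factor in your expansion is now bounded uniformly in $\alpha$, and each mixed term carries $\mathbf{e}^{-\alpha\tau/2}$. Your $2^{\bar L}$-term estimate (or the paper's telescoping bound) then gives $\rho\le 1/2$ for all sufficiently large $\alpha$ and every $t^*>0$.
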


\begin{Proof}
See  Appendix \ref{Appendix-proof-lemma-ratio}.
\end{Proof}

\begin{Remark}
	Lemma \ref{lemma-ratio} in fact characterizes the relationship between the parameters $T_\text{c}$, $\delta$, and $\alpha$, and the constant $0<\rho<1$, which governs the exponential convergence rate.
More explicitly, from proof of Lemma \ref{lemma-ratio},
we can choose $\ell = \lfloor - \frac{\ln(C_1)}{\ln \delta + \lambda^* T_c} \rfloor+ 1$ so that \eqref{eq-l} is satisfied. Then, it follows from \eqref{eq-relation} that
$$\rho = C_1 \left(\delta \mathbf{e}^{\lambda^* T_\mathrm{c} }\right)^{\lfloor - \frac{\ln(C_1)}{\ln \delta + \lambda^* T_c} \rfloor+ 1} + C_3(t^*) \mathbf{e}^{-(\alpha + \lambda^*)\tau}.$$
	This expression shows that a smaller $T_\text{c}$ or a smaller $\delta$ leads to a faster exponential convergence rate, and increasing $\alpha$ can also accelerate the exponential convergence rate.
\end{Remark}

\medskip

Now, let us turn to the proof of Theorem \ref{Theorem}.

\noindent
\emph{Proof of Theorem 1:}
Following Lemma \ref{lemma-ratio},
given any $t^* > 0$,
there exist a positive integer $\ell$
and a corresponding positive number $\alpha(\ell,t^*)$
such that \eqref{eq-lemma-ratio} holds when the feedback gain matrix $K$ is designed as
\begin{align*}
  K = \mu B^T W_\mathrm{c}^{-1}\left(\alpha, t^*\right), \quad \mu\ge\frac{1}{\lambda_{H}}, \quad \alpha > \alpha(\ell,t^*).
\end{align*}

Fix a time instant $t\ge 0$.
Then, $t \in  [t_j, t_{j+1})$ for some nonnegative integer $j$.
In addition, $j_{\kappa\ell} \leq j < j_{(\kappa + 1)\ell}$ for some
nonnegative integer $\kappa$.
Thus, from the expression of $\bar{x}(t)$ in \eqref{eq-bar-x-solution-Xi}, we have
\begin{align}\label{eq-ratio-Xi}
	&\quad\, \left\|\bar{x}(t)\right\| \notag\\
	&\le \left\|\Xi_{j}(t - t_{j})\right\| \left\|\prod_{r=j_{\kappa\ell}}^{j-1}\Xi_{r}(\tau_r)\right\|\left\|\prod_{r=0}^{j_{\kappa\ell}-1}\Xi_{r}(\tau_r)\right\| \|\bar{x}(0)\|\notag\\
	&\le \left\|\Xi_{j}(t - t_{j})\right\| \left\|\prod_{r=j_{\kappa\ell}}^{j-1}\Xi_{r}(\tau_r)\right\|\prod_{k=0}^{\kappa-1}\left\|\prod_{r=j_{k\ell}}^{j_{(k+1)\ell}-1}\Xi_{r}(\tau_r)\right\| \|\bar{x}(0)\| \notag\\
	&\le  \left\| \Xi_{j}(t - t_{j})\right\| \prod_{r=j_{\kappa\ell}}^{j-1}\left\|\Xi_{r}(\tau_r)\right\| \rho^\kappa \|\bar{x}(0)\|
\end{align}
in which,
$\prod_{r=j_{\kappa\ell}}^{j-1}\left\|\Xi_{r}(\tau_r)\right\|$
is set to $1$ if $j=j_{\kappa\ell}$.
From \eqref{eq-P-sigma-P-norm}, \eqref{eq-Xi}, \eqref{eq-Psi}, and \eqref{eq-expA},
and by using Lemma \ref{lemma-Phi}, we have
\begin{align}\label{eq-sum-Xi}
	\left\|\Xi_j(t - t_{j})\right\| &\le \left\|\Phi_j(t - t_{j})\right\| + \left\|\Psi_j(t - t_{j})\right\| \notag\\
	&\le C_0(t^*) \mathbf{e}^{-\alpha (t - t_{j})} + C_1 \mathbf{e}^{\lambda^* (t - t_{j})} \notag\\
	&\le C_0(t^*) + C_1 \mathbf{e}^{\lambda^* T_\mathrm{c}}.
\end{align}
	Moreover, since $j - j_{\kappa\ell} < j_{(\kappa + 1)\ell} - j_{\kappa\ell} \leq \left\lceil\frac{T_\mathrm{c}}{\tau}\right\rceil\ell$,
\begin{align}\label{eq-prod-Xi}
	\prod_{r=j_{\kappa\ell}}^{j-1} \left\|\Xi_{r}(\tau_r)\right\| &\le \prod_{r=j_{\kappa\ell}}^{j-1} \left(C_0(t^*) \mathbf{e}^{-\alpha \tau_r} + C_1 \mathbf{e}^{\lambda^* \tau_r}\right) \notag\\
	&\le \prod_{r=j_{\kappa\ell}}^{j-1} \left(C_0(t^*) + C_1\right)\mathbf{e}^{\lambda^* \tau_r} \notag\\
	&\le \left(\left(C_0(t^*) + C_1\right)^{\left\lceil\frac{T_\mathrm{c}}{\tau}\right\rceil}\mathbf{e}^{\lambda^* T_\mathrm{c}}\right)^{\ell}.
\end{align}

By combining \eqref{eq-ratio-Xi}, \eqref{eq-sum-Xi}, and \eqref{eq-prod-Xi},
and letting
\begin{equation*}
  C_{2}(t^*) = \left(C_0(t^*) + C_1 \mathbf{e}^{\lambda^* T_\mathrm{c}}\right)\left(\left(C_0(t^*) + C_1\right)^{\left\lceil\frac{T_\mathrm{c}}{\tau}\right\rceil}\mathbf{e}^{\lambda^* T_\mathrm{c}}\right)^{\ell}
\end{equation*}
we have
\begin{align*}
	\left\|\bar{x}(t)\right\| &\le C_{2}(t^*)  \rho^\kappa  \left\|\bar{x}(0)\right\|
	= \frac{C_{2}(t^*)}{\rho} \rho^{\kappa+1} \left\|\bar{x}(0)\right\|\notag\\
	&\le \frac{C_{2}(t^*)}{\rho} \rho^{\frac{t}{T_\mathrm{c} \ell}} \left\|\bar{x}(0)\right\|
	= \frac{C_{2}(t^*)}{\rho} \mathbf{e}^{- \varrho t}\left\|\bar{x}(0)\right\|
\end{align*}
where $\varrho= \frac{\ln(\rho^{-1})}{T_\mathrm{c} \ell} > 0$.
Thus, $\lim_{t\to\infty}\bar{x}(t)=0$ exponentially
and the proof is complete.
\hfill $\Box$\par

\section{Dual Problem}\label{Section-observer}

In this section, we further study the dual problem of Problem \ref{Problem},
namely, the output-based distributed observer design problem for a linear leader system over jointly connected switching networks.

\subsection{Output-based Distributed Observer}

Consider a leader-follower multi-agent system consisting of one leader system
and $N$ follower subsystems, in which, the linear leader system with an output
is described as follows:
\begin{align}\label{eq-system-output}
	\dot x_{0} (t) = A x_{0} (t), \quad y_0 (t) = C x_{0} (t), \quad t \geq 0
\end{align}
where $x_{0} (t) \in \RR^n$ and $y_0 (t) \in \RR^m$ are the state and the output of the leader system, respectively,
$A \in \RR^{n \times n}$ is the system matrix which can be unstable,
and $C \in \RR^{m \times n}$ is the output matrix.

As before, given the switching graph $\bar \GG_{\sigma (t)} = (\bar{\mathcal{V}}, \bar{\mathcal{E}}_{\sigma(t)})$,
we associate node $0$ with the leader system \eqref{eq-system-output}
and each of the other $N$ nodes with each of the $N$ follower subsystems.
Suppose, at any time instant $t$,
only those follower subsystems who are the children of the leader system
can measure the output $y_{0}(t)$ of the leader system \eqref{eq-system-output}.
Then, the output-based distributed observer design problem
aims to design $N$ local observers,
one for each of the $N$ follower subsystems,
so that they can cooperatively estimate the full state $x_{0}(t)$
of the leader system \eqref{eq-system-output} while satisfying the communication constraints
imposed by the switching graph $\bar \GG_{\sigma (t)}$.

We propose a distributed observer candidate as follows:
\begin{equation}\label{eq-compensator}
	\dot \eta_i (t) = A \eta_i (t) + L C\sum_{j=0}^N a_{ij}(t) \left( \eta_j (t) - \eta_i(t) \right), \   i = 1, \ldots, N
\end{equation}
where $\eta_{0}(t)=x_{0}(t)$,
$\eta_i (t) \in \RR^n$
is the state of the $i$th local observer,
 $L \in \RR^{n \times m}$ is an observer gain matrix to be designed,
and $a_{ij} (t)$ are entries of the adjacency matrix $\bar{\mathcal{A}}_{\sigma(t)}$
of the switching graph $\bar \GG_{\sigma (t)}$.
Specifically, since $C\eta_{0}(t)=Cx_{0}(t)=y_{0}(t)$,
the $i$th local observer can make use of the output $y_{0}(t)$ of the leader system
only when $a_{i0}(t)=1$.
Therefore, the distributed observer candidate \eqref{eq-compensator} will be called an output-based distributed observer
for the leader system \eqref{eq-system-output}, as it only relies on the leader's output.

Now we describe the output-based distributed observer design problem as follows.

\begin{Problem}\label{Problem-observer}
Given the leader system \eqref{eq-system-output} and the switching graph $\bar \GG_{\sigma (t)}$,
design a distributed dynamic compensator of the form \eqref{eq-compensator} such that,
for any initial conditions $x_{0} (0) \in \RR^n$ and $\eta_i(0) \in \RR^n,\, i= 1, \ldots, N$,
the solutions of systems \eqref{eq-system-output} and \eqref{eq-compensator} exist over $[0, \infty)$
and satisfy
$\lim_{t \to \infty} (\eta_i (t) - x_{0} (t)) = 0,\, i = 1, \ldots, N$, exponentially.
\end{Problem}

In addition to Assumptions \ref{Ass-undirected} and \ref{Ass-connected}
on the switching graph $\bar \GG_{\sigma (t)}$,
another standard assumption on the leader system \eqref{eq-system-output}
for solving Problem \ref{Problem-observer} is given below.

\begin{Assumption}\label{Ass-observable}
	The matrix pair $\left(C, A\right)$ is observable.
\end{Assumption}

\subsection{Dual System}

Denote the estimation error between the $i$th local observer in \eqref{eq-compensator}
and the leader system \eqref{eq-system-output} as
\begin{equation*}\label{eq-bar-i-eta-def}
  \bar{\eta}_i (t) = \eta_i (t) - x_{0} (t), \quad  i = 1, \ldots, N
\end{equation*}
and let
\begin{equation*}\label{eq-bar-eta-def}
  \bar \eta(t) = \mathrm{col}\left(\bar{\eta}_1 (t), \ldots, \bar{\eta}_N (t)\right) \in \RR^{Nn}.
\end{equation*}
Similar to Problem \ref{Problem},
it can be obtained that
\begin{align}\label{eq-bar-eta}
	\dot{\bar\eta} (t) = \left(I_{N}\otimes A - \mathcal{H}_{\sigma(t)}\otimes LC \right)\bar\eta(t), \quad t\ge 0
\end{align}
and Problem \ref{Problem-observer} is solvable if and only if
it can be made that
$\lim_{t\to\infty}\bar\eta (t)=0$ exponentially.

We call the linear switched system \eqref{eq-bar-eta}
the dual system of  system \eqref{eq-der-L0-otimes-x}.
By Theorem \ref{Theorem}, under Assumptions \ref{Ass-stabilizable} to \ref{Ass-connected},
system \eqref{eq-der-L0-otimes-x} can be exponentially stabilized via designing the feedback gain matrix $K$.
Following the same approach, we will show that the dual system \eqref{eq-bar-eta}
can also be exponentially stabilized via designing the observer gain matrix $L$.

In particular, similar to what has been performed in Subsection \ref{Subsection-Problem-Conversion},
under Assumption \ref{Ass-undirected},
it can be derived that the solution $\bar\eta(t)$
of system \eqref{eq-bar-eta} satisfies
\begin{align*}
	\bar\eta(t) = \left(P_{\sigma(t_j)} \otimes \mathbf{e}^{A(t-t_j)}+\bar{\Gamma}_{\sigma(t_j)} \mathbf{e}^{M_j'(t-t_j)} \bar{\Gamma}_{\sigma(t_j)}^T \right) \bar\eta(t_j) &\notag\\
 t \in [t_j, t_{j+1}], \ j=0,1,2,\ldots &
\end{align*}
where $P_{\sigma(t_j)}$ and  $\bar{\Gamma}_{\sigma(t_j)}$ are defined in \eqref{eq-P-sigma-def} and \eqref{eq-bar-gamma}, respectively, and
\begin{align*}
M_j' = \mathrm{diag}\left\{A - \lambda_{n_j +1}^{(j)}LC, \ldots, A - \lambda_{N}^{(j)}LC\right\}.
\end{align*}

Denote
\begin{align}\label{eq-Xi'}
	\Xi_j'( t) = \Psi_j(  t) + \Phi_j'(  t) , \quad   t \in [0, \tau_j], \quad j=0,1,2,\ldots
\end{align}
where $\Psi_j(\cdot),\,\Phi'_j(\cdot) \in \RR^{Nn \times Nn}$ are given by
\begin{subequations}\label{eq-PsiPhi'}
\begin{align}
	\Psi_j (  t) &= P_{\sigma(t_j)} \otimes \mathbf{e}^{A  t} \label{eq-Psi'}  \\
\Phi_j'(  t) &= \bar{\Gamma}_{\sigma(t_j)} \mathbf{e}^{M_j'  t} \bar{\Gamma}_{\sigma(t_j)}^T \label{eq-Phi'}.
\end{align}
\end{subequations}
Then, for any $t \ge 0$, we have
$t_j \le t < t_{j+1}$ for some nonnegative integer $j$,
and hence
\begin{align}\label{eq-bar-eta-expression}
	\bar\eta(t) = \Xi_{j}'(t - t_{j}) \left( \prod_{r=0}^{j-1}\Xi_{r}'(\tau_r)\right)\bar\eta(0), \quad t\ge 0.
\end{align}

\begin{Remark}
  It is noted that the only difference between the closed-form expression of the solution $\bar{\eta}(t)$ in \eqref{eq-bar-eta-expression}
  for the dual system \eqref{eq-bar-eta} and
  the closed-form expression of the solution $\bar{x}(t)$ in \eqref{eq-bar-x-solution-Xi}
  for system \eqref{eq-der-L0-otimes-x}
  lies in that  $\bar{\eta}(t)$ depends on $M_{j}'=\mathrm{diag}\left\{A - \lambda_{n_j +1}^{(j)}LC, \ldots, A - \lambda_{N}^{(j)}LC\right\}$
  while $\bar{x}(t)$ depends on $M_j = \mathrm{diag}\left\{A - \lambda_{n_j +1}^{(j)}BK, \ldots, A - \lambda_{N}^{(j)}BK\right\}$.
  More explicitly, in $M_j$, the matrix pair $(A,B)$ is controllable and $K$ is to be designed,
  whereas, in $M_{j}'$, the matrix pair $(C,A)$ is observable and $L$ is to be designed.
\end{Remark}

\subsection{Solvability of Problem \ref{Problem-observer}}
Dual to \eqref{eq-Gramian}, given any $\alpha > 0$ and  $t^* > 0$,
we define a weighted observability Gramian as follows:
\begin{align*}
	W_\mathrm{o}(\alpha, t^*): = \int_0^{t^*} \mathbf{e}^{-\alpha t} \cdot \mathbf{e}^{-\frac{A^T}{2} t} C^T C \mathbf{e}^{-\frac{A}{2} t} dt \in \RR^{n \times n}.
\end{align*}
Under Assumption \ref{Ass-observable}, $W_\mathrm{o}(\alpha, t^{*})$ is positive definite.
Then, we design the observer gain $L$ matrix as
\begin{align}\label{eq-L}
	L = \mu W_\mathrm{o}^{-1}(\alpha, t^*) C^T\in \RR^{n \times m}
\end{align}
where $\mu \ge \frac{1}{\lambda_{H}}$
and $\lambda_{H}>0$ is defined in \eqref{eq-lower-bound-H}.

\begin{Lemma}\label{lemma-Phi'}
Consider $\Phi_j'( t)$ in \eqref{eq-Phi'}.
Under Assumptions~\ref{Ass-undirected} and \ref{Ass-observable},
given any $\alpha>0$ and $t^* > 0$,
let  the observer gain matrix $L$  be given by \eqref{eq-L} and denote the maximum and the minimum eigenvalue of $W_\mathrm{o}(0, t^*)$ by $\lambda_M '$ and $\lambda_m '$, respectively.
Then, we have
$$\left\|\Phi_j'(  t)\right\| \leq C_0'(t^*) \mathbf{e}^{-\alpha  t}, \ \forall\,   t \in [0, \tau_j],\ \forall\, j =0,1,2,\ldots $$
where $C_0'(t^*) = \sqrt{\frac{\lambda_M '}{\lambda_m '}}\mathbf{e}^{\frac{\alpha}{2}t^*} \ge 1$.
\end{Lemma}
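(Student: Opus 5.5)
The plan is to reduce Lemma~\ref{lemma-Phi'} to Lemma~\ref{lemma-Phi} by duality, working through transposes. The key observation is
\begin{equation*}
\left(A - \lambda LC\right)^T = A^T - \lambda C^T L^T .
\end{equation*}
With $L$ given by \eqref{eq-L}, we have $L^T = \mu C W_\mathrm{o}^{-1}(\alpha,t^*)$, since $W_\mathrm{o}$ is symmetric. Now set $\tilde A = A^T$ and $\tilde B = C^T$. Then
\begin{equation*}
W_\mathrm{o}(\alpha,t^*) = \int_0^{t^*} \mathbf{e}^{-\alpha t}\, \mathbf{e}^{-\frac{\tilde A}{2}t}\tilde B\tilde B^T \mathbf{e}^{-\frac{\tilde A^T}{2}t}\,dt ,
\end{equation*}
which is exactly the weighted controllability Gramian \eqref{eq-Gramian} for the pair $(\tilde A,\tilde B)$. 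Consequently $L^T = \mu \tilde B^T W_\mathrm{c}^{-1}(\alpha,t^*)$ is precisely the gain \eqref{eq-K} designed for $(\tilde A,\tilde B)$. Assumption~\ref{Ass-observable} ensures that $(\tilde A,\tilde B)=(A^T,C^T)$ is controllable, so Assumption~\ref{Ass-stabilizable} holds for this pair.

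Next I will invoke the rephrased \cite[Lemma 3]{WangZhuFeng2019TAC}, namely \eqref{lemma3of2019TAC}, for the pair $(\tilde A,\tilde B)$ with $\mu\ge 1/\lambda_H$. It gives, for every $\lambda\ge\lambda_H$,
\begin{equation*}
\|\mathbf{e}^{(A^T-\lambda C^TL^T)t}\|\le C_0'(t^*)\mathbf{e}^{-\alpha t}, \qquad t\ge0 .
\end{equation*}
Here the constant is built from the extreme eigenvalues of the unweighted Gramian $W_\mathrm{c}(0,t^*)$ associated with $(\tilde A,\tilde B)$, and that Gramian is exactly $W_\mathrm{o}(0,t^*)$. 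Hence the constant equals $\sqrt{\lambda_M'/\lambda_m'}\,\mathbf{e}^{\frac{\alpha}{2}t^*}=C_0'(t^*)$, and it is at least $1$ since $\lambda_M'\ge\lambda_m'$. Because the induced Euclidean norm is invariant under transposition and $\mathbf{e}^{X^T t}=(\mathbf{e}^{Xt})^T$, the same bound holds for $\|\mathbf{e}^{(A-\lambda LC)t}\|$.

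Finally, I will proceed exactly as in the proof of Lemma~\ref{lemma-Phi}. By Lemma~\ref{lemma-general-H}, which requires Assumption~\ref{Ass-undirected}, every $\lambda_p^{(j)}$ with $p>n_j$ satisfies $\lambda_p^{(j)}\ge\lambda_H$. Since $M_j'$ is block diagonal, $\|\mathbf{e}^{M_j't}\|$ is the maximum of the block norms, so $\|\mathbf{e}^{M_j't}\|\le C_0'(t^*)\mathbf{e}^{-\alpha t}$. Moreover $\bar\Gamma_{\sigma(t_j)}$ has orthonormal columns, so $\|\bar\Gamma_{\sigma(t_j)}\|\le1$, and the claimed bound on $\|\Phi_j'(t)\|$ follows.

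The main point needing care is the identification of the constant. One must check that the transposed pair reproduces the Gramian with the same $\alpha$-weighting and the same exponent sign. It does, because $\left(\mathbf{e}^{-\frac{A}{2}t}\right)^T=\mathbf{e}^{-\frac{A^T}{2}t}$, so the two integrands match term by term. Beyond this, the argument is a direct transcription of the proof of Lemma~\ref{lemma-Phi}.
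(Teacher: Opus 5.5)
Your proposal is correct and is the paper's argument: the paper proves Lemma~\ref{lemma-Phi'} by reducing it to Lemma~\ref{lemma-Phi}, using the controllability of $(A^T,C^T)$ and the identity $\|\mathbf{e}^{(A-\lambda_p^{(j)}LC)t}\|=\|\mathbf{e}^{(A^T-\lambda_p^{(j)}C^TL^T)t}\|$. Your explicit checks fill in details the paper leaves implicit: that $W_\mathrm{o}(\alpha,t^*)$ is the weighted controllability Gramian of $(A^T,C^T)$, that $L^T$ is exactly the gain \eqref{eq-K} for that pair, and that the constant therefore becomes $C_0'(t^*)$.
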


\begin{Remark}
The proof of Lemma \ref{lemma-Phi'} follows similarly to that of Lemma \ref{lemma-Phi},
leveraging the duality between the observability of $(C,A)$
and the controllability of   $(A^{T},C^{T})$, along with the fact
 that $\left\|\mathbf{e}^{\left(A - \lambda_p^{(j)} LC\right) t}\right\|=\left\|\mathbf{e}^{\left(A^{T} - \lambda_p^{(j)} C^{T}L^{T}\right)  t}\right\|,
\,p = n_j+1, \ldots, N,\, j=0,1,2,\ldots $.
\end{Remark}

\begin{Lemma}\label{lemma-ratio'}
Consider $\Xi_j'( t)$ in \eqref{eq-Xi'}.
Under Assumptions \ref{Ass-undirected} to \ref{Ass-observable},
and the condition \eqref{eq-restriction-A},
given any $t^* > 0$,
there exist a positive integer $\ell'$ and a corresponding positive number $\alpha'(\ell', t^*) $
such that,
if the observer gain matrix $L$ is designed according to \eqref{eq-L}
with $\alpha > \alpha'(\ell', t^*)$,
then
\begin{align*}
	\left\|\prod_{r=j_k}^{j_{k+\ell}-1}\Xi_{r}'(\tau_r)\right\| \le \rho', \qquad \forall\, k =0,1,2,\ldots
\end{align*}
holds for some $\rho' \in (0, 1)$.
\end{Lemma}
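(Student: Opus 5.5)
The plan is to repeat the argument behind Lemma~\ref{lemma-ratio} (Appendix~\ref{Appendix-proof-lemma-ratio}) verbatim, with $\Phi_j$ replaced by $\Phi_j'$. Lemma~\ref{lemma-Phi'} plays the role of Lemma~\ref{lemma-Phi}. Since $\Psi_j$ is common to both $\Xi_j$ and $\Xi_j'$, everything concerning the projection part carries over unchanged.

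\emph{Expansion of the product.} Fix $\lambda^*$ as in \eqref{eq-lambda-*-selection} and $C_1$ as in \eqref{eq-expA}. For a block $r=j_k,\ldots,j_{k+\ell}-1$, I expand
\[
\prod_{r}\Xi_r'(\tau_r)=\prod_r\bigl(\Psi_r(\tau_r)+\Phi_r'(\tau_r)\bigr)
\]
as a sum of $2^{J}$ words, where $J=j_{k+\ell}-j_k\le \lceil T_\mathrm{c}/\tau\rceil \ell$. Only the word consisting entirely of $\Psi$ factors matters at leading order.

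\emph{The pure $\Psi$ word.} The mixed-product rule gives
\[
\prod_r \Psi_r(\tau_r)=\Bigl(\prod_r P_{\sigma(t_r)}\Bigr)\otimes \mathbf{e}^{A(t_{j_{k+\ell}}-t_{j_k})}.
\]
Grouping the projections into the $\ell$ consecutive subintervals $[t_{j_{k+i}},t_{j_{k+i+1}})$ and applying Lemma~\ref{lemma-connect-graph} to each, the norm is at most $C_1\bigl(\delta\,\mathbf{e}^{\lambda^* T_\mathrm{c}}\bigr)^{\ell}$. Here I use $t_{j_{k+\ell}}-t_{j_k}\le \ell T_\mathrm{c}$ and $\lambda^*>0$; if $\lambda^*\le 0$ I simply enlarge it. By \eqref{eq-lambda-*-selection} we have $\delta\,\mathbf{e}^{\lambda^*T_\mathrm{c}}<1$. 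I therefore choose $\ell$ large enough that this term is at most some $\rho_0<1$, namely $\ell=\lfloor -\ln C_1/(\ln\delta+\lambda^*T_\mathrm{c})\rfloor+1$.

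\emph{The words containing at least one $\Phi'$ factor.} This is the step requiring care. Each such word is bounded by a product of the factors $\|\Psi_r(\tau_r)\|\le C_1\mathbf{e}^{\lambda^*\tau_r}$ and $\|\Phi_r'(\tau_r)\|\le C_0'(t^*)\mathbf{e}^{-\alpha\tau_r}\le C_0'(t^*)\mathbf{e}^{-\alpha\tau}$ from Lemma~\ref{lemma-Phi'}. The $\Psi$ factors together contribute at most $C_1^{J}\mathbf{e}^{\lambda^*\ell T_\mathrm{c}}$. At least one $\Phi'$ factor is present, each $\Phi'$ factor other than that one is bounded by $C_0'(t^*)$, and there are at most $2^J$ words. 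Hence the total of these words is at most $C_3'(t^*)\,\mathbf{e}^{-\alpha\tau}$. The constant $C_3'(t^*)$ depends on $\ell$, $t^*$, $C_1$, $T_\mathrm{c}$, $\tau$ and on $C_0'(t^*)$.

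The subtle point is that $C_0'(t^*)=\sqrt{\lambda_M'/\lambda_m'}\,\mathbf{e}^{\alpha t^*/2}$ itself grows with $\alpha$. To make this term small I will need $\tau>t^*/2$ times the power of $C_0'$ involved, and the crude bound above does not give that. So I will bound more carefully, as in the appendix. Grouping consecutive $\Phi'$ factors shows that each maximal run of $\Phi'$ factors contributes at most $(C_0')^{m}\mathbf{e}^{-\alpha m\tau}=\bigl(\sqrt{\lambda_M'/\lambda_m'}\bigr)^m\mathbf{e}^{-\alpha m(\tau-t^*/2)}$. The decay then follows provided $t^*<2\tau$, and $t^*$ may be chosen this way since the lemma is stated for any $t^*$; alternatively, the constraint is absorbed in the same manner as in Appendix~\ref{Appendix-proof-lemma-ratio}. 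With this, the mixed words total at most $C_3'(t^*)\,\mathbf{e}^{-(\alpha+\lambda^*)\tau}$ times factors independent of $\alpha$.

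\emph{Conclusion.} I choose $\alpha'(\ell',t^*)$ so that this mixed term is below $(1-\rho_0)/2$. Then $\rho'=\rho_0+(1-\rho_0)/2<1$ works uniformly in $k$, because none of the bounds depend on $k$. The main obstacle is precisely the mixed-word estimate: the $\alpha$-dependence of $C_0'(t^*)$ must be handled exactly as in the controllability case, and once Lemma~\ref{lemma-Phi'} is substituted for Lemma~\ref{lemma-Phi} the duality makes every line identical.
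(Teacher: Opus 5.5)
Your skeleton is the paper's: Appendix~\ref{Appendix-proof-lemma-ratio}, with Lemma~\ref{lemma-Phi'} in place of Lemma~\ref{lemma-Phi}. Expanding into $2^J$ words instead of using the telescoping identity is immaterial, and the pure-$\Psi$ estimate and the choice of $\ell$ are correct. The gap is at the point you flagged yourself.

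You handle the growth of $C_0'(t^*)=\sqrt{\lambda_M'/\lambda_m'}\,\mathbf{e}^{\alpha t^*/2}$ by assuming $t^*<2\tau$, on the grounds that ``the lemma is stated for any $t^*$''. That reverses the quantifier. The lemma asserts its conclusion for every $t^*>0$, so you must also treat $t^*\ge 2\tau$. In that range, Lemma~\ref{lemma-Phi'} only gives $\|\Phi_r'(\tau_r)\|\le\sqrt{\lambda_M'/\lambda_m'}\,\mathbf{e}^{\alpha(t^*/2-\tau_r)}$. This does not decay in $\alpha$ on intervals with $\tau_r\le t^*/2$ (e.g.\ when all $\tau_r=\tau$), so no choice of $\alpha$ makes your mixed-word sum small.

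Your fallback, ``absorbed as in Appendix~\ref{Appendix-proof-lemma-ratio}'', does not help. There, $C_3(t^*)$ contains $(C_0(t^*)+C_1)^{\lceil T_\mathrm{c}/\tau\rceil\ell}$ and so grows like $\mathbf{e}^{\alpha t^*\lceil T_\mathrm{c}/\tau\rceil\ell/2}$. The threshold $\alpha(\ell,t^*)$ is therefore defined through a quantity that itself depends on $\alpha$. That argument only closes for small $t^*$, roughly $t^*\lceil T_\mathrm{c}/\tau\rceil\ell<2\tau$, which is a stronger restriction than yours. Your factor-by-factor bound is actually sharper than the appendix's $\|\Xi_r\|\le (C_0+C_1)\mathbf{e}^{\lambda^*\tau_r}$, but it still needs $t^*<2\tau$.

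The missing ingredient is an overshoot bound that grows more slowly in $\alpha$ than $\mathbf{e}^{\alpha\tau}$, and the Gramian supplies it. With $L$ as in \eqref{eq-L} and $\lambda\mu\ge 1$, $V=x^TW_\mathrm{o}(\alpha,t^*)x$ satisfies $\dot V\le -2\alpha V$ along $\dot x=(A-\lambda LC)x$. Hence $\|\mathbf{e}^{(A-\lambda LC)t}\|\le\sqrt{\kappa}\,\mathbf{e}^{-\alpha t}$, where $\kappa$ is the condition number of $W_\mathrm{o}(\alpha,t^*)$. The factor $\mathbf{e}^{\alpha t^*/2}$ in $C_0'(t^*)$ comes only from the crude bound $W_\mathrm{o}(\alpha,t^*)\ge\mathbf{e}^{-\alpha t^*}W_\mathrm{o}(0,t^*)$.

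For any $0<t'\le t^*$ you also have $W_\mathrm{o}(0,t^*)\ge W_\mathrm{o}(\alpha,t^*)\ge W_\mathrm{o}(\alpha,t')\ge\mathbf{e}^{-\alpha t'}W_\mathrm{o}(0,t')$, and $W_\mathrm{o}(0,t')>0$ by Assumption~\ref{Ass-observable}. Taking $t'=\min\{t^*,\tau\}$ gives
\[
\|\Phi_r'(\tau_r)\|\le\sqrt{\frac{\lambda_{\max}(W_\mathrm{o}(0,t^*))}{\lambda_{\min}(W_\mathrm{o}(0,t'))}}\;\mathbf{e}^{-\alpha(\tau_r-t'/2)}\le c\,\mathbf{e}^{-\alpha\tau/2},
\]
with $c\ge 1$ independent of $\alpha$ and $k$.

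Use this instead of Lemma~\ref{lemma-Phi'}. In each mixed word, every $\Phi'$ factor is at most $c$ and at least one is at most $c\,\mathbf{e}^{-\alpha\tau/2}$. The mixed words then sum to at most $(2cC_1)^{\lceil T_\mathrm{c}/\tau\rceil\ell}\,\mathbf{e}^{\lambda^*\ell T_\mathrm{c}}\,\mathbf{e}^{-\alpha\tau/2}$, and your conclusion goes through for every $t^*>0$.

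As written, your argument proves only the case $t^*<2\tau$. That would suffice for Theorem~\ref{Theorem-dual}, where $t^*$ is a free design parameter, but it is not the lemma as stated.
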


\begin{Remark}
The proof of Lemma \ref{lemma-ratio'} is the same as the proof of Lemma \ref{lemma-ratio}.
Furthermore, by invoking Lemmas \ref{lemma-Phi'} and \ref{lemma-ratio'}
in the same way as that in the proof of Theorem \ref{Theorem},
we can conclude the solvability of Problem \ref{Problem-observer} as in the following Theorem \ref{Theorem-dual},
which can be viewed as the dual result of Theorem \ref{Theorem}.
\end{Remark}

\begin{Theorem}\label{Theorem-dual}
Under Assumptions \ref{Ass-undirected} to \ref{Ass-observable},
  Problem \ref{Problem-observer} is solvable if the condition \eqref{eq-restriction-A} holds.
\end{Theorem}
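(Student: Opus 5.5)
The plan is to reproduce the proof of Theorem \ref{Theorem} with $\Xi_j'$ in place of $\Xi_j$, using Lemmas \ref{lemma-Phi'} and \ref{lemma-ratio'} as the two ingredients. As noted after \eqref{eq-bar-eta}, Problem \ref{Problem-observer} is solvable if and only if $\bar\eta(t)\to 0$ exponentially. So it suffices to show that the closed-form solution \eqref{eq-bar-eta-expression} decays exponentially for a suitable observer gain.

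The first step is to fix the gain. Fix any $t^*>0$. Under Assumptions \ref{Ass-undirected} to \ref{Ass-observable} and condition \eqref{eq-restriction-A}, Lemma \ref{lemma-ratio'} supplies an integer $\ell'$ and a threshold $\alpha'(\ell',t^*)$. Choose $L$ as in \eqref{eq-L} with $\mu\ge 1/\lambda_H$ and $\alpha>\alpha'(\ell',t^*)$. Then every block product of $\ell'$ consecutive jointly-connected windows satisfies $\|\prod_{r=j_{k}}^{j_{k+\ell'}-1}\Xi_r'(\tau_r)\|\le\rho'<1$. Here I read the index $\ell$ in the displayed statement of Lemma \ref{lemma-ratio'} as $\ell'$.

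The second step is the bound at an arbitrary time. For $t\in[t_j,t_{j+1})$, pick $\kappa$ with $j_{\kappa\ell'}\le j<j_{(\kappa+1)\ell'}$ and split the product in \eqref{eq-bar-eta-expression} into three pieces:
\begin{itemize}
\item the $\kappa$ full blocks, each with norm at most $\rho'$;
\item a partial block of at most $\lceil T_\mathrm{c}/\tau\rceil\ell'$ factors;
\item the last factor $\Xi_j'(t-t_j)$.
\end{itemize}
For the partial block and the last factor, combine $\|P_{\sigma(t_j)}\|\le 1$ from \eqref{eq-P-sigma-P-norm}, the estimate \eqref{eq-expA} $\|\mathbf{e}^{At}\|\le C_1\mathbf{e}^{\lambda^* t}$ with $\lambda^*$ chosen as in \eqref{eq-lambda-*-selection}, and Lemma \ref{lemma-Phi'}. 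This gives
\begin{equation*}
\|\Xi_j'(s)\|\le C_0'(t^*)\mathbf{e}^{-\alpha s}+C_1\mathbf{e}^{\lambda^* s}\le \left(C_0'(t^*)+C_1\right)\mathbf{e}^{\lambda^* s},
\end{equation*}
exactly as in \eqref{eq-sum-Xi} and \eqref{eq-prod-Xi}, with $C_0$ replaced by $C_0'$. These two pieces are therefore bounded by a constant $C_2'(t^*)$ that does not depend on $t$.

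The final step is conversion to an exponential rate. From the bounds above, $\|\bar\eta(t)\|\le C_2'(t^*)\rho'^{\kappa}\|\bar\eta(0)\|$. Since $t< t_{j_{(\kappa+1)\ell'}}\le (\kappa+1)\ell' T_\mathrm{c}$, we have $\rho'^{\kappa}\le \rho'^{-1}\mathbf{e}^{-\varrho' t}$ with $\varrho'=\ln(1/\rho')/(T_\mathrm{c}\ell')>0$. This is the required exponential decay.

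The only substantive content is Lemma \ref{lemma-ratio'}, which is taken as given, so I expect no real obstacle. The one point to check is that the dual version of the bound \eqref{lemma3of2019TAC} applies. It does: transposing gives $\|\mathbf{e}^{(A-\lambda LC)t}\|=\|\mathbf{e}^{(A^T-\lambda C^TL^T)t}\|$, and $(A^T,C^T)$ is controllable by Assumption \ref{Ass-observable}. Moreover, $L^T=\mu C W_\mathrm{o}^{-1}(\alpha,t^*)$ is precisely the gain \eqref{eq-K} built for the pair $(A^T,C^T)$, because $W_\mathrm{o}$ is the weighted controllability Gramian of that pair. Finally, Lemma \ref{lemma-general-H} ensures every nonzero eigenvalue $\lambda_p^{(j)}$ is at least $\lambda_H$, which is what that bound requires.
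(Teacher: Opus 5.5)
Your proposal is correct and takes the paper's own route: the paper proves Theorem~\ref{Theorem-dual} by invoking Lemmas~\ref{lemma-Phi'} and~\ref{lemma-ratio'} exactly as in the proof of Theorem~\ref{Theorem}, and it justifies the dual exponential bound by transposition and the controllability of $(A^T,C^T)$, as you do. Your block decomposition, the uniform bound on the partial block and last factor with $C_0$ replaced by $C_0'$, and the conversion $\rho'^{\kappa}\le\rho'^{-1}\mathbf{e}^{-\varrho' t}$ reproduce that argument faithfully, including your reading of the $\ell$ in Lemma~\ref{lemma-ratio'} as $\ell'$.
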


\begin{Remark}
  Similar to what has been demonstrated in Remark \ref{Remark-controllable},
  if Problem \ref{Problem-observer} is solvable, then Assumption \ref{Ass-observable}
  can be relaxed to that the matrix pair $(C,A)$ is detectable.
\end{Remark}

\begin{figure}
	\centering
	\subfigure[$\bar\GG_1$]{
		\begin{minipage}[t]{0.22\textwidth}
		\centering
		\includegraphics[width=0.9\linewidth, trim=0 0 0 0]{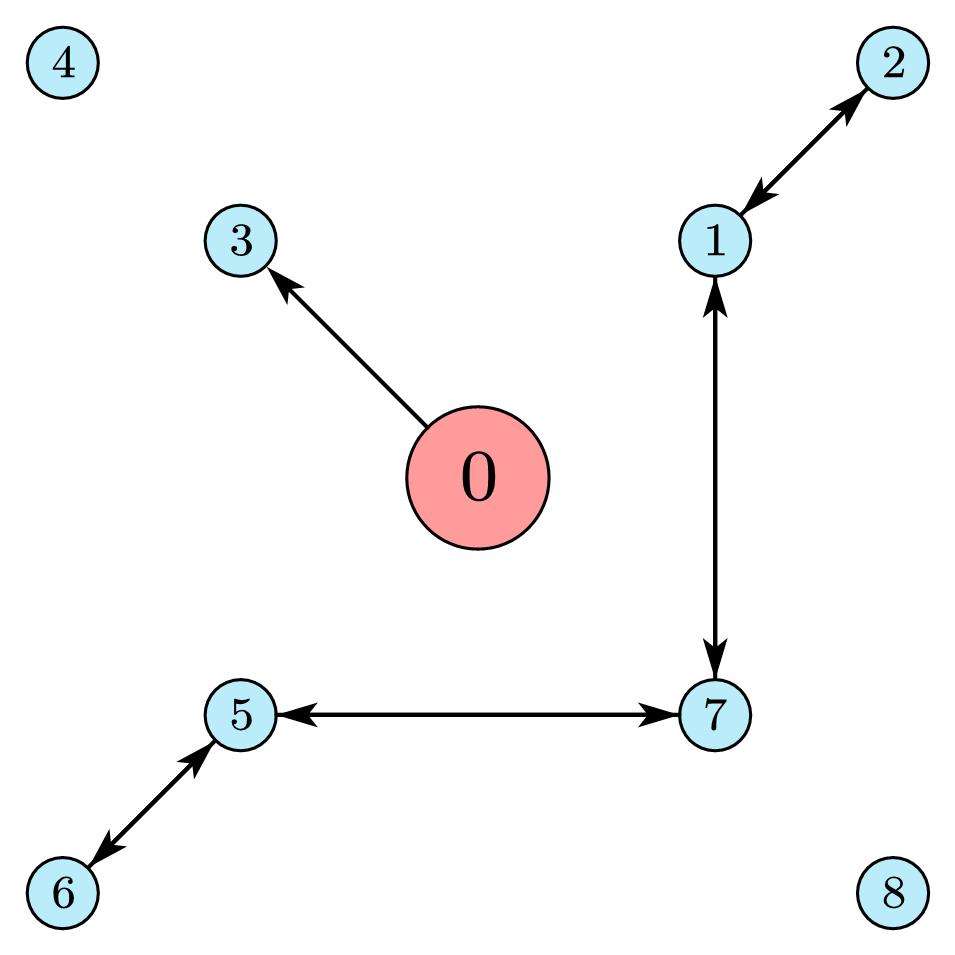}
		\end{minipage}
	}
	\subfigure[$\bar\GG_2$]{
		\begin{minipage}[t]{0.22\textwidth}
		\centering
		\includegraphics[width=0.9\linewidth, trim=0 0 0 0]{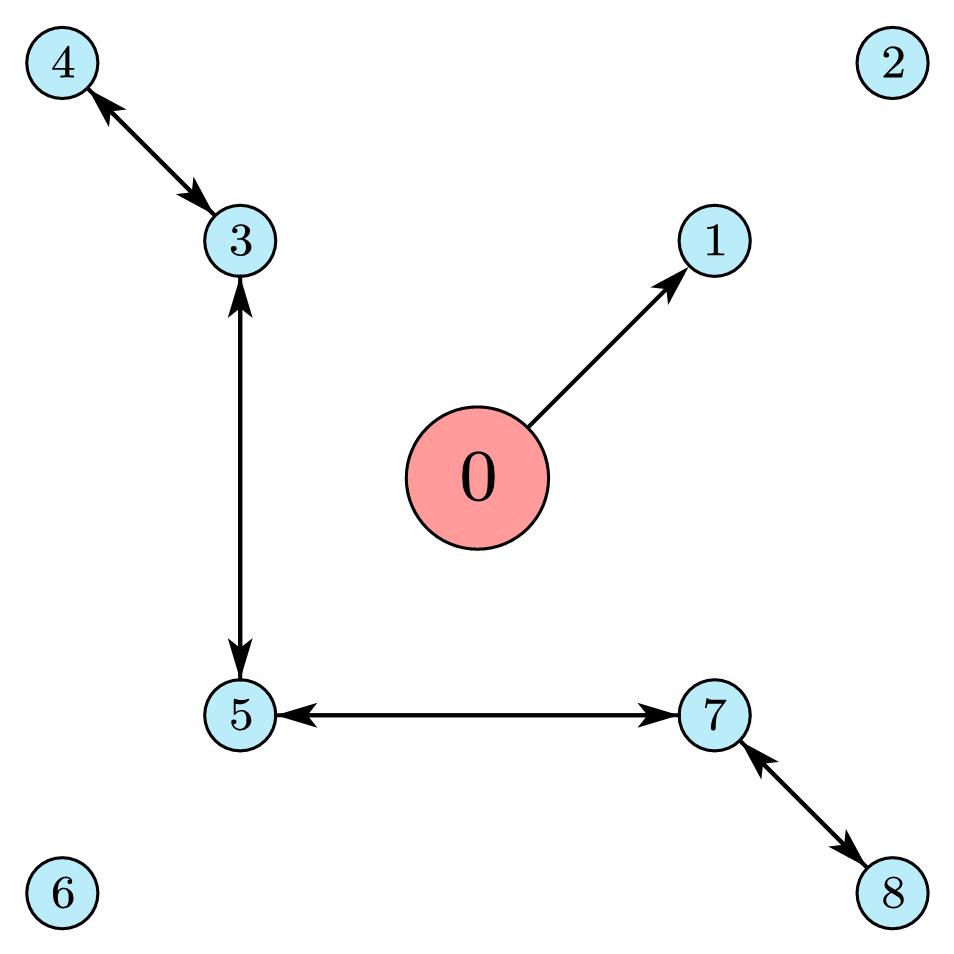}
		\end{minipage}
	}
	\caption{Switching graph $\bar \GG_{\sigma(t)}$.}\label{fig-G}
\end{figure}

\section{Numerical Examples}\label{Section-Example}

Consider a leader-follower multi-agent system consisting of one leader system
and eight follower subsystems  as follows:
\begin{subequations}\label{eq-example}
\begin{align}
	\dot x_0 (t) &= A x_0 (t), \qquad\  y_{0}(t)=Cx_{0}(t),&  t \geq 0 & \label{eq-example-leader}\\
	\dot x_i (t) &= A x_i (t) + B u_i (t), &  i = 1, \ldots, 8 &
\end{align}
\end{subequations}
where $x_0(t) \in \RR^3$ and  $y_{0}(t)\in \RR$ denote the state and the output of the leader system, respectively,
$x_i(t) \in \RR^3$ and $u_i(t) \in \RR$ denote the state and the control input of the $i$th follower subsystem, respectively, and
\begin{equation*}
   A = \begin{bmatrix} 2/3 & -25/6 & 0 \\ 4/3 & 0 & 1/3 \\ 0 & -1 & 1/6 \end{bmatrix}, \quad B = \begin{bmatrix} 0 \\ 0 \\ 1 \end{bmatrix}, \quad C=B^{T}.
\end{equation*}
In particular, the system matrix $A$ has eigenvalues at $\left\{0.195, 0.319 \pm \mathrm{j}2.40 \right\}$ and  is clearly unstable.
Moreover, it can  be verified that the matrix pair $(A,B)$ is controllable
and the matrix pair $(C,A)$ is observable.
Hence, Assumptions \ref{Ass-stabilizable} and \ref{Ass-observable} are satisfied.

\begin{figure}
	\centering
	\includegraphics[width=1\linewidth]{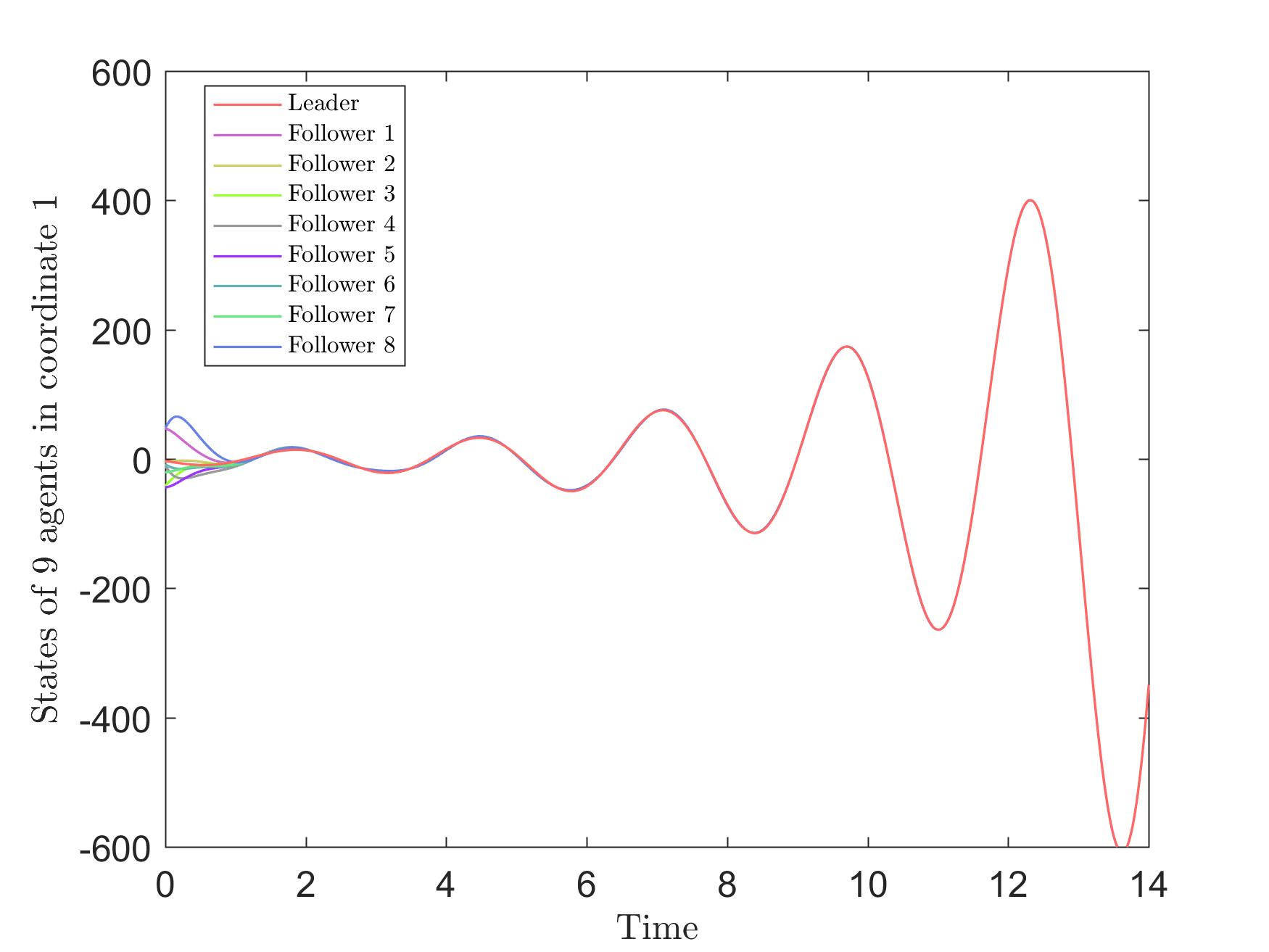}
	\caption{Profile on the first component of the states of the leader and followers.}\label{fig-results-1}
\end{figure}

\begin{figure}
	\centering
	\includegraphics[width=1\linewidth]{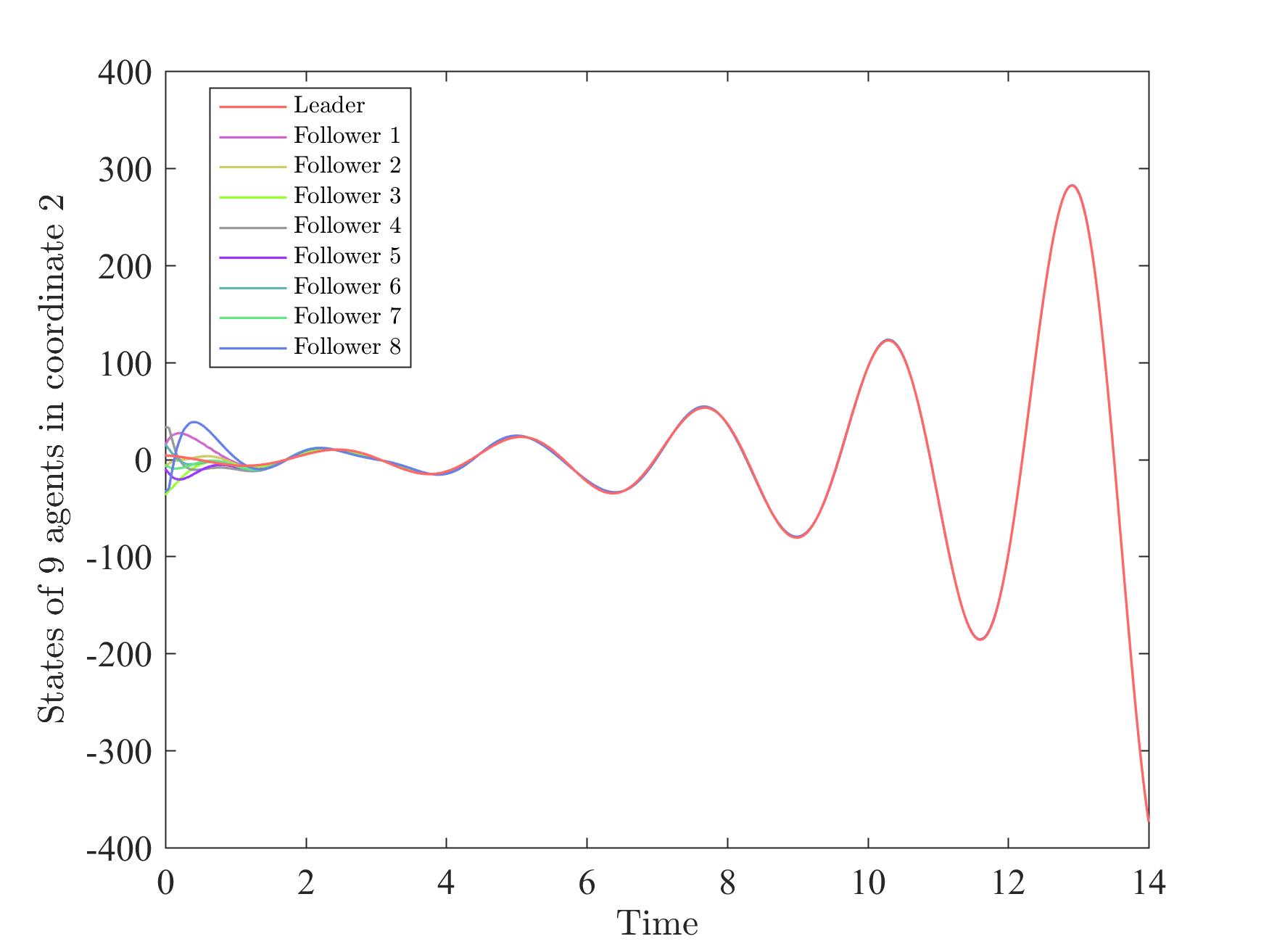}
	\caption{Profile on the second component of the states of the leader and followers.}\label{fig-results-2}
\end{figure}

\begin{figure}
	\centering
	\includegraphics[width=1\linewidth]{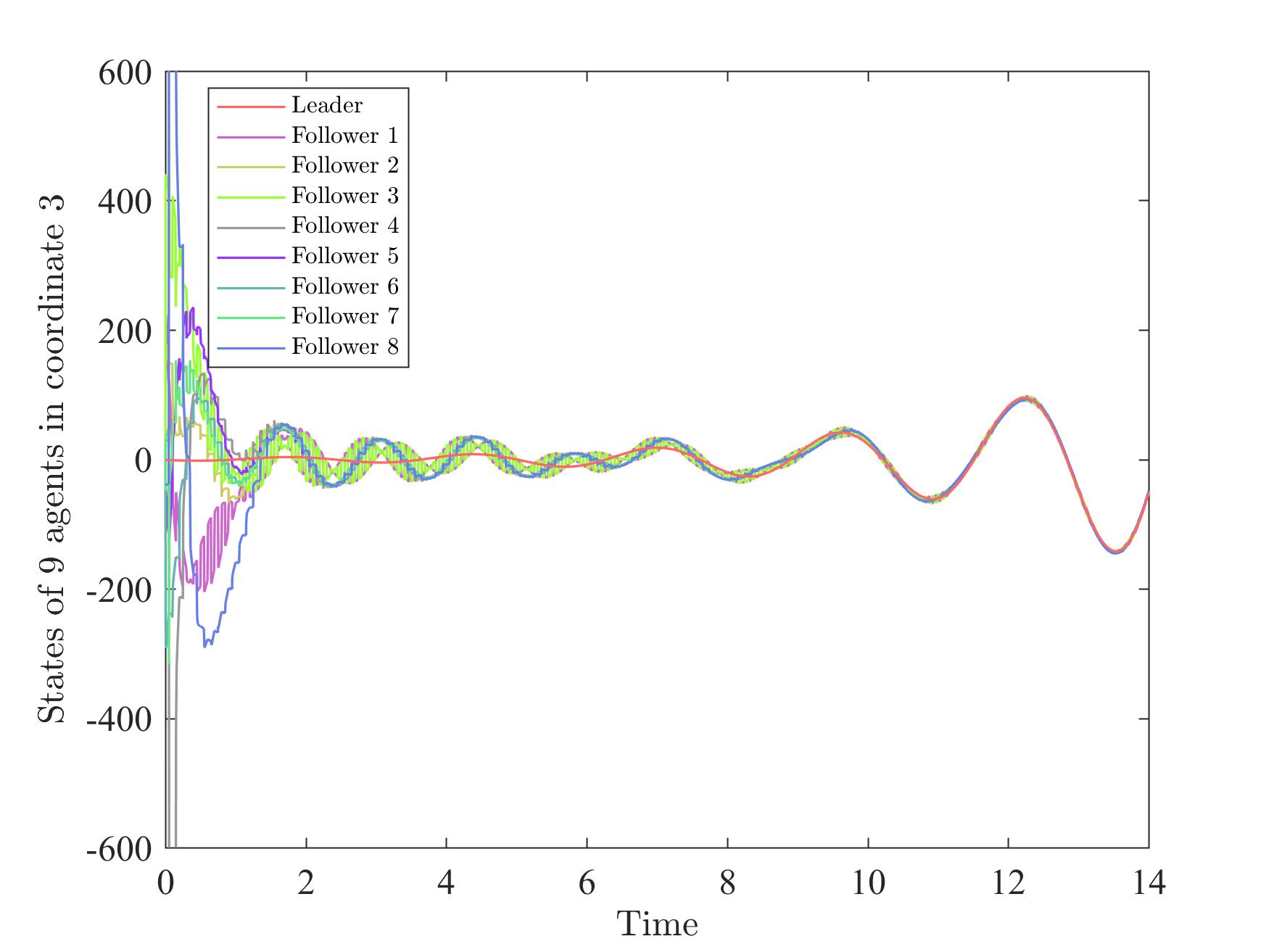}
	\caption{Profile on the third component of the states of the leader and followers.}\label{fig-results-3}
\end{figure}

\begin{figure}
	\centering
	\includegraphics[width=1\linewidth]{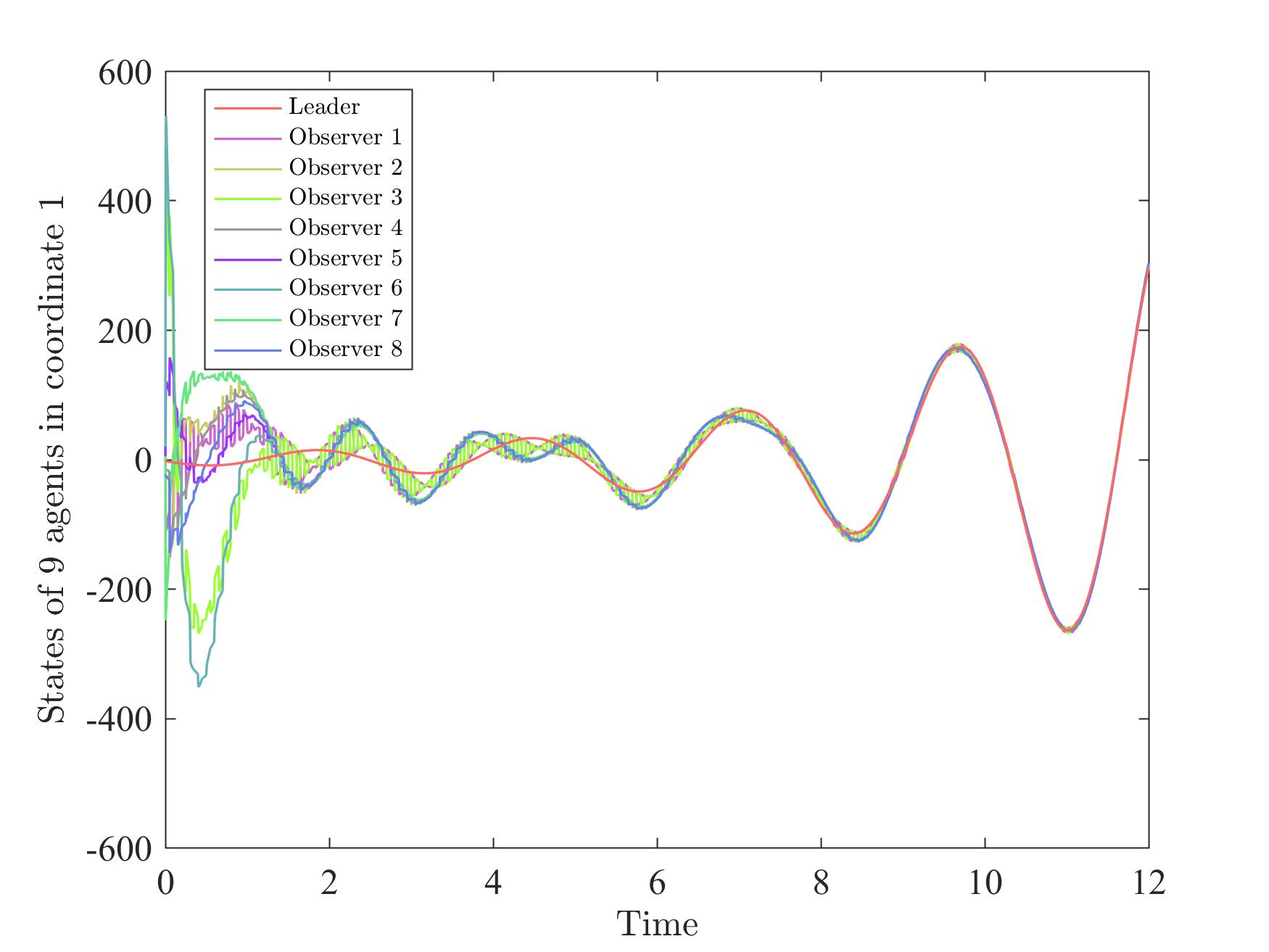}
	\caption{Profile on the first component of the states of the leader and observers.}\label{fig-results-observer-1}
\end{figure}

\begin{figure}
	\centering
	\includegraphics[width=1\linewidth]{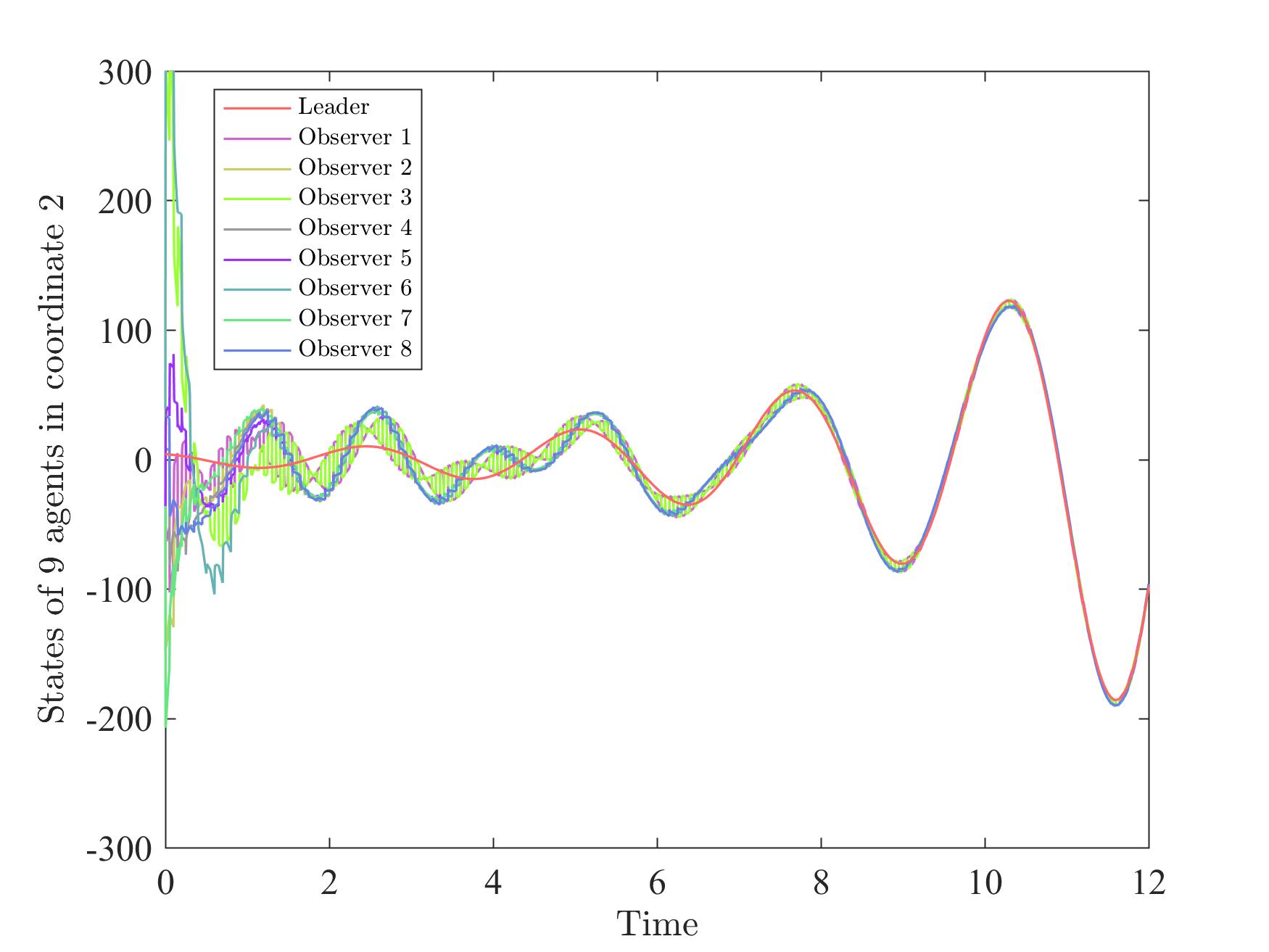}
	\caption{Profile on the second component of the states of the leader and observers.}\label{fig-results-observer-2}
\end{figure}

\begin{figure}
	\centering
	\includegraphics[width=1\linewidth]{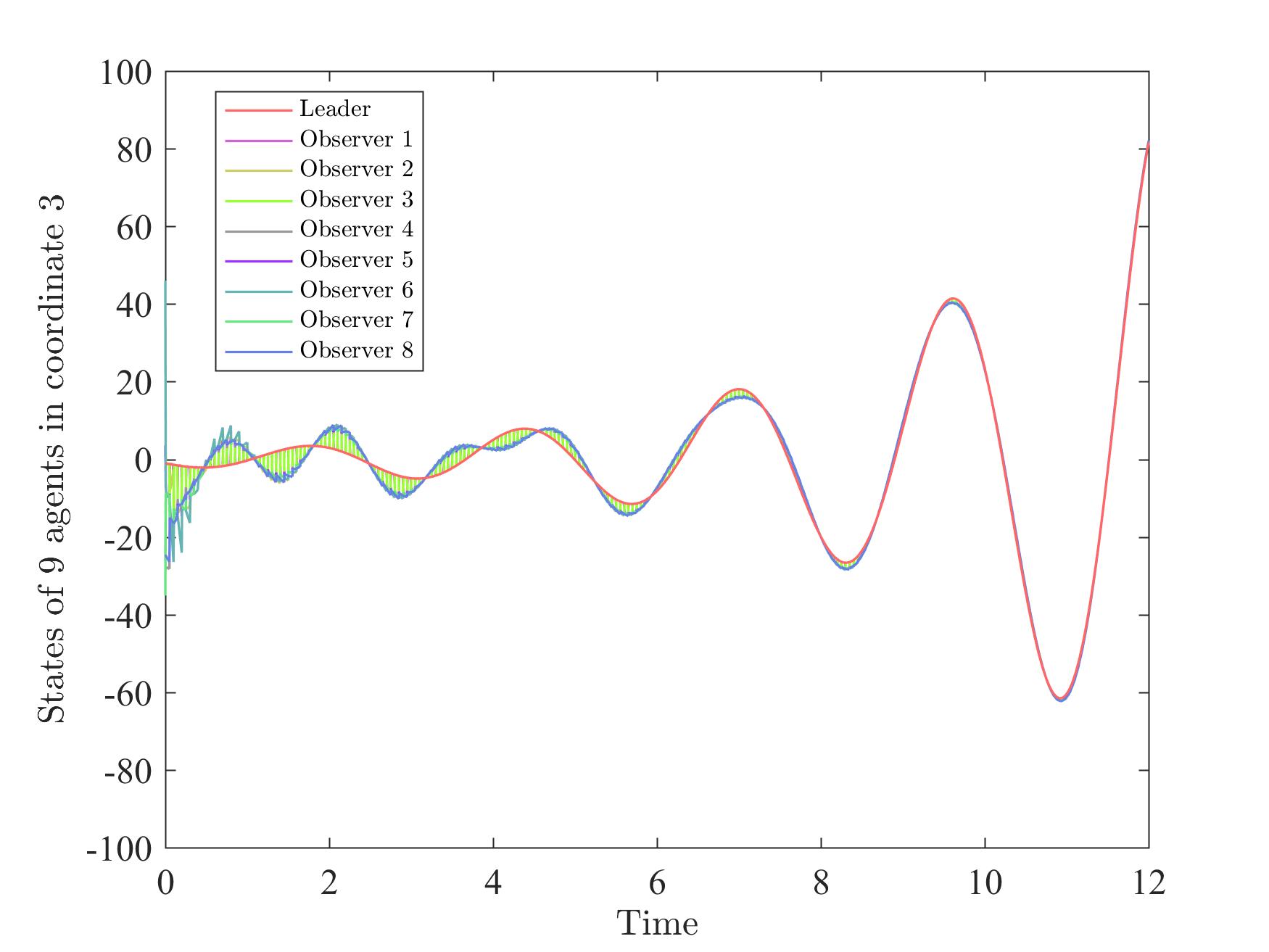}
	\caption{Profile on the third component of the states of the leader and observers.}\label{fig-results-observer-3}
\end{figure}

Let the switching communication network for the above leader-follower multi-agent system  be described by the switching graph $\bar \GG_{\sigma(t)}$ in Figure \ref{fig-G},  which is dictated by the following switching signal:
\begin{align*}
	\sigma(t) =
	\begin{cases}
		1,\enspace \mathrm{if} \enspace s T_\mathrm{c}  \leq t < \left(s + \frac{1}{2}\right) T_\mathrm{c} \\
		2,\enspace \mathrm{if} \enspace \left(s + \frac{1}{2}\right) T_\mathrm{c}  \leq t < \left(s + 1\right) T_\mathrm{c} \\
	\end{cases}
\end{align*}
where $s = 0, 1, 2, \ldots$, and $T_\mathrm{c} = 0.1$.
Then, it can be seen that Assumptions \ref{Ass-undirected} and \ref{Ass-connected} are satisfied.
In addition, the number $\delta$ as expressed in \eqref{eq-delta-def}
can be found as $0.863$.
Thus, it can be verified that $\lambda_{\max}(A) = 0.319 < - \frac{\ln \delta}{T_\mathrm{c}} = 1.47$, i.e., the condition \eqref{eq-restriction-A} holds.

On the one hand, by Theorem \ref{Theorem}, the leader-following exponential consensus problem
for the leader-follower multi-agent system \eqref{eq-example} over the above jointly connected switching network
can be solved by a distributed static state feedback control law of the form \eqref{eq-control-law}.
We design the feedback gain matrix $K$ as in \eqref{eq-K} with 
$\mu = \frac{1}{\lambda_H}$, $\alpha = 3$, and $t^* = 5$.
Simulation is performed with random initial conditions.
Figures \ref{fig-results-1} to \ref{fig-results-3}
show the trajectories of each component of the state of the leader system
and the states of the follower subsystems.
It can be observed that the states of the follower subsystems
all converge to the state of the leader system,
even though the trajectories of the leader system diverge exponentially.

On the other hand, by Theorem \ref{Theorem-dual},
an output-based distributed observer of the form \eqref{eq-compensator} can be designed
to exponentially estimate the state $x_{0}(t)$ of the leader system \eqref{eq-example-leader}
over the above jointly connected switching network.
We design the observer gain matrix $L$ as in \eqref{eq-L} with
$\mu = \frac{1}{\lambda_H}$, $\alpha = 3$, and $t^* = 5$.
Simulation is performed with random initial conditions.
Figures \ref{fig-results-observer-1} to \ref{fig-results-observer-3}
show the trajectories of each component of the state of the leader system and
the states of the local observers.
It can be seen that the states of the local observers
all converge to the state of the leader system,
even though the trajectories of the leader system diverge exponentially.


\section{Conclusions}\label{Section-Conclusions}

In this paper, we have resolved the long standing issue
that leader-following consensus over jointly connected switching networks
could only be achieved for marginally stable linear systems.
We have alleviated this stringent condition by allowing the system matrix to exhibit a certain degree of exponential instability.
This degree of instability is characterized by two quantities
derived from the jointly connected condition on the switching graph,
one of which is obtained by studying the product of the orthogonal projection matrices
onto the kernel of the leader-follower matrix of the switching graph.
Then, within this degree of exponential instability,
we have demonstrated the solvability of the leader-following exponential consensus problem for general linear multi-agent systems over jointly connected switching networks.
By exploiting duality,
we have further presented the design of an exponentially convergent output-based distributed observer for general linear leader systems over jointly connected switching networks, which is the dual problem to the leader-following exponential consensus problem.
Finally, we note that it would also be interesting to establish the discrete-time counterpart of the breakthrough made in this paper.


\appendices

\section{Notation on Graph}\label{Appendix-graph}
A graph $\GG := (\mathcal{V}, \mathcal{E})$ consists of a finite node set
$\mathcal{V} = \left\{1,2,\ldots,N\right\}$ and an edge set $\mathcal{E} \subseteq \mathcal{V} \times \mathcal{V}$.
An edge of $\mathcal{E}$ from node $j$ to node $i,\, j \ne i$, is denoted by $(j, i)$,
and node $i$ is called a child of node $j$.
The edge $(i, j)$ is called undirected if $(i, j) \in \mathcal{E}$ implies $(j, i) \in \mathcal{E}$.
The graph $\GG$ is called undirected if every edge in $\mathcal{E}$ is undirected.
If the graph contains a set of edges of the form
$\left\{(i_1, i_2), (i_2, i_3), \ldots, (i_{k-1}, i_k)\right\}$,
then this set is called a path from node $i_1$ to node $i_k$,
and node $i_k$ is said to be reachable from node $i_1$,
A graph is called strongly connected if there exists a path between any pair of nodes.
An undirected and strongly connected graph is called connected.

The adjacency matrix of a graph $\GG$ is a nonnegative matrix
$\mathcal{A} := [a_{ij}]^N_{i,j=1} \in \RR^{N \times N}$,
where $a_{ij} = 1$ if $(j, i) \in \mathcal{E}$ and $a_{ij} = 0$ otherwise.
Then, the Laplacian matrix $\LL := [l_{ij}]^N_{i,j=1} \in \RR^{N \times N}$ of $\GG$
can be further defined from $\mathcal{A}$ with $l_{ii} = \sum^N_{j=1} a_{ij}$ and, for $i \ne j$,\, $l_{ij} = -a_{ij}$. Moreover, $\LL$ is symmetric and positive semi-definite if and only if the graph $\GG$ is undirected \cite{Godsil2001}.

Given the switching signal $\sigma : [0, \infty) \to \mathcal{P} = \left\{1, 2,\ldots, n_0\right\}$
and $n_0$ graphs $\GG_{p} = (\mathcal{V}, \mathcal{E}_{p})$, $p = 1, 2, \ldots, n_0$,
each with the corresponding adjacency matrix denoted by $\mathcal{A}_{p}$
and the Laplacian matrix by $\LL_{p},\, p = 1, 2, \ldots, n_0$,
we call the time-varying graph $\GG_{\sigma(t)} = (\mathcal{V}, \mathcal{E}_{\sigma(t)})$ a switching graph,
and denote its adjacency matrix by $\mathcal{A}_{\sigma(t)}$
and its Laplacian matrix by $\LL_{\sigma(t)}$.
Finally, the graph $\GG := (\mathcal{V}, \mathcal{E})$ with $\mathcal{E} = \bigcup^q_{p=1} \mathcal{E}_{p}$
is called the union graph of the graphs $\GG_{p},\, p = 1, 2, \ldots, q$,
and is denoted by $\GG = \bigcup^q_{p=1} \GG_{p}$.

\section{Proof of Lemma \ref{lemma-ratio}}\label{Appendix-proof-lemma-ratio}

Before proving Lemma \ref{lemma-ratio},
we first present the following corollary of Lemma \ref{lemma-connect-graph}.

\begin{Corollary}\label{Corollary-connect-graph}
 Under Assumptions \ref{Ass-undirected} and \ref{Ass-connected},
there exists a number $0 < \delta < 1$ such that,
for any positive integer $\ell$,
\begin{equation}\label{eq-norm-products-le-1-forall-ell}
	\left\|\prod_{r=j_k}^{j_{k+\ell}-1} P_{\sigma(t_{r})}\right\| \leq \delta^{\ell}, \quad \forall\, k =0,1,2,\ldots.
\end{equation}
\end{Corollary}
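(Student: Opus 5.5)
The plan is to take $\delta$ to be the same number given by Lemma~\ref{lemma-connect-graph}, split the long product into $\ell$ consecutive blocks, and use submultiplicativity of the induced norm.

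Fix $k\ge0$ and a positive integer $\ell$. Since $\{j_k\}$ is an increasing subsequence of switching indices, the index range $r=j_k,\ldots,j_{k+\ell}-1$ is the disjoint union of the consecutive ranges $r=j_{k+s},\ldots,j_{k+s+1}-1$ for $s=0,1,\ldots,\ell-1$. With the paper's ordering convention (later indices multiply on the left), this gives
\begin{equation*}
\prod_{r=j_k}^{j_{k+\ell}-1} P_{\sigma(t_r)} = \prod_{s=0}^{\ell-1}\left(\prod_{r=j_{k+s}}^{j_{k+s+1}-1} P_{\sigma(t_r)}\right),
\end{equation*}
where the outer product is again ordered with larger $s$ on the left. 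This is just associativity of matrix multiplication.

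Next we bound the norm. The induced Euclidean norm is submultiplicative, so the norm of the left side is at most the product over $s$ of the norms of the inner blocks. Each inner block is exactly a product of the form covered by \eqref{eq-norm-products-le-1-forall}, with index $k+s$ in place of $k$. Since that bound holds for every nonnegative index, each block has norm at most $\delta$. Hence the whole product has norm at most $\delta^{\ell}$, which is \eqref{eq-norm-products-le-1-forall-ell}.

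The only point needing care is that $\delta$ must not depend on $k$ or $\ell$. This is guaranteed because in \eqref{eq-delta-def} $\delta$ is defined as a maximum over all $k$; that maximum is taken over finitely many products, since the switching index set is finite and $j_{k+1}-j_k\le\lceil T_\mathrm{c}/\tau\rceil$. There is no real obstacle beyond keeping track of the ordering of the factors.
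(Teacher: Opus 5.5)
Your proposal is correct and follows the paper's proof: you split the product into the $\ell$ consecutive blocks indexed by $r=j_{k+s},\ldots,j_{k+s+1}-1$, apply submultiplicativity of the induced norm, and bound each block by $\delta$ using Lemma~\ref{lemma-connect-graph}. Your observation that $\delta$ is uniform in $k$, since it is defined as a maximum over finitely many products, is what justifies applying that lemma to each block.
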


\begin{Proof}
For any positive integer $\ell$, we have
\begin{equation*}
	\prod_{r=j_{k}}^{j_{k+\ell}-1} P_{\sigma(t_{r})} = \prod_{s=k}^{k+\ell-1} \left(\prod_{r=j_{s}}^{j_{s+1}-1} P_{\sigma(t_{r})} \right).
\end{equation*}
Then, by repeatedly using Lemma \ref{lemma-connect-graph},
we have
\begin{equation*}
	\left\|\prod_{r=j_{k}}^{j_{k+\ell}-1} P_{\sigma(t_{r})}\right\| \leq \prod_{s=k}^{k+\ell-1}\left\|\prod_{r=j_{s}}^{j_{s+1}-1} P_{\sigma(t_{r})}\right\| \leq \delta^\ell
\end{equation*}
which holds for any $k=0,1,2,\ldots$.
\end{Proof}

\medskip

Now we proceed to the proof of Lemma \ref{lemma-ratio}.

\begin{Proof}
First, under Assumptions \ref{Ass-undirected} and \ref{Ass-connected},
by using Corollary \ref{Corollary-connect-graph},
we can obtain from \eqref{eq-Psi} and \eqref{eq-expA} that
\begin{align}\label{eq-prod-Psi}
	&\quad\ \left\|\prod_{r=j_k}^{j_{k+\ell}-1}\Psi_{r}(\tau_r)\right\|  \notag \\
	& = \left\| \prod_{r=j_k}^{j_{k+\ell}-1} P_{\sigma(t_r)} \otimes \mathbf{e}^{A\tau_r} \right\| \notag\\
	& \le \left\| \prod_{r=j_k}^{j_{k+\ell}-1} P_{\sigma(t_r)}\right\|
	\left\|\mathbf{e}^{A (t_{j_{k+\ell}} - t_{j_k})}\right\| \notag\\
	& \le  \delta^{\ell} C_1 \mathbf{e}^{\lambda^* T_\mathrm{c} \ell}=C_{1} \left(\delta \mathbf{e}^{\lambda^* T_\mathrm{c} }\right)^{\ell}, \quad \forall\, k =0,1,2,\ldots.
\end{align}
Since $\lambda^*$ as selected in \eqref{eq-lambda-*-selection} that satisfies
$\lambda^* < - \frac{\ln \delta}{T_\mathrm{c}}$, we have $0<\delta \mathbf{e}^{\lambda^* T_\mathrm{c} }<1$.
Thus, by choosing the positive integer $\ell$ to be such that
\begin{align} \label{eq-l}
	\ell > - \frac{\ln(C_1)}{\ln \delta + \lambda^* T_\mathrm{c}}
\end{align}
we can obtain from \eqref{eq-prod-Psi} that
\begin{equation}\label{eq-C-0-ell-<1}
  \left\|\prod_{r=j_k}^{j_{k+\ell}-1}\Psi_{r}(\tau_r)\right\|
	\le C_{1} \left(\delta \mathbf{e}^{\lambda^* T_\mathrm{c} }\right)^{\ell} <1, \quad \forall\,  k =0,1,2,\ldots.
\end{equation}

Next, we derive an upper bound for the following term:
\begin{align*}
	\left\|\prod_{r=j_k}^{j_{k+\ell}-1}\Xi_r(\tau_r) - \prod_{r=j_k}^{j_{k+\ell}-1}\Psi_r(\tau_r) \right\|.
\end{align*}
By expanding the first product of matrices, we can obtain
\begin{align*}
	&\quad\, \prod_{r=j_k}^{j_{k+\ell}-1}\Xi_r(\tau_r) - \prod_{r=j_k}^{j_{k+\ell}-1}\Psi_r(\tau_r) \notag\\
	&=  \prod_{r=j_k}^{j_{k+\ell}-1}\left( \Phi_r(\tau_r) + \Psi_r(\tau_r)\right) -
	 \prod_{r=j_k}^{j_{k+\ell}-1} \Psi_r(\tau_r) \notag\\
	& =  \sum_{s=j_k}^{j_{k+\ell}-1}
	\Bigg( \prod_{r=s+1}^{j_{k+\ell}-1}\Xi_r(\tau_r) \cdot \Phi_s(\tau_s) \cdot \prod_{r=j_k}^{s-1} \Psi_r(\tau_r) \Bigg) \end{align*}
in which, the following convention is adopted:
\begin{align*}
\prod_{r=j_k}^{j_k-1} \Psi_r(\tau_r) = I_{Nn}, \quad
\prod_{r=j_{k+\ell}}^{j_{k+\ell}-1} \Xi_r(\tau_r) = I_{Nn}.
\end{align*}
Then, we have
\begin{align}\label{eq-product-Xi-product-Psi}
	&\quad\, \left\|\prod_{r=j_k}^{j_{k+\ell}-1}\Xi_r(\tau_r) - \prod_{r=j_k}^{j_{k+\ell}-1}\Psi_r(\tau_r) \right\| \notag\\
	& \le \sum_{s=j_k}^{j_{k+\ell}-1}
	\Bigg( \prod_{r=s+1}^{j_{k+\ell}-1}\left\|\Xi_r(\tau_r)\right\|  \cdot \left\|\Phi_s(\tau_s)\right\| \cdot \prod_{r=j_k}^{s-1} \left\|\Psi_r(\tau_r)\right\| \Bigg) \notag\\
	& \le \sum_{s=j_k}^{j_{k+\ell}-1}
	\Bigg( \prod_{r=s+1}^{j_{k+\ell}-1}\left(\left\|\Phi_r(\tau_r)\right\| + \left\|\Psi_r(\tau_r)\right\|\right) \cdot \left\|\Phi_s(\tau_s)\right\| \notag\\
	&\quad\quad\quad\quad\quad\quad\quad\quad\quad\quad \cdot \prod_{r=j_k}^{s-1} \left(\left\|\Phi_r(\tau_r)\right\| + \left\|\Psi_r(\tau_r)\right\|\right) \Bigg) \notag\\
&= \sum_{s=j_k}^{j_{k+\ell}-1}
\Bigg( \frac{\left\|\Phi_s(\tau_s)\right\|}{\left\|\Phi_s(\tau_s)\right\| + \left\|\Psi_s(\tau_s)\right\|}
 \notag\\
 &\quad\quad\quad\quad\quad\quad\quad\quad\quad
\cdot  \prod_{r=j_{k}}^{j_{k+\ell}-1}\left(\left\|\Phi_r(\tau_r)\right\| + \left\|\Psi_r(\tau_r)\right\|\right)\Bigg)\notag\\
	& \le \frac{(j_{k+\ell}-j_k)  \left\| \Phi_{s'}(\tau_{s'})\right\|}{\left\| \Psi_{s'}(\tau_{s'}) \right\| + \left\| \Phi_{s'}(\tau_{s'}) \right\|} \cdot \prod_{r=j_{k}}^{j_{k+\ell}-1}\left(\left\|\Phi_r(\tau_r)\right\| + \left\|\Psi_r(\tau_r)\right\|\right)
\end{align}
where $s'=\mathrm{argmax}_{s \in \left\{j_{k}, j_{k+1}, \ldots, j_{k+\ell}-1\right\}} \frac{\left\|\Phi_s(\tau_s)\right\|}{\left\|\Phi_s(\tau_s)\right\| + \left\|\Psi_s(\tau_s)\right\|} $.

From \eqref{eq-P-sigma-P-norm}, \eqref{eq-Psi}, and \eqref{eq-expA}, we have
\begin{equation*}
  \left\| \Psi_{r}(\tau_{r}) \right\| \le C_1 \mathbf{e}^{\lambda^* \tau_r}, \quad \forall\, r = j_k, j_k+1, \cdots, j_{k+\ell}-1.
\end{equation*}
Further, by Lemma \ref{lemma-Phi},
under Assumptions \ref{Ass-stabilizable} and \ref{Ass-undirected},
given any $\alpha>0$ and $t^* > 0$,
designing $K$ as in \eqref{eq-K} gives
\begin{equation*}
  \left\| \Phi_{r}(\tau_{r}) \right\| \le C_0(t^*) \mathbf{e}^{- \alpha \tau_r}, \  \forall\, r = j_k, j_k+1, \cdots, j_{k+\ell}-1.
\end{equation*}
Since it is clear that
$\mathbf{e}^{-\alpha t} \le 1 \le \mathbf{e}^{\lambda^* t},\, \forall\, t\ge0$,
we have
\begin{align*}
 &\quad\, \left\| \Psi_{r}(\tau_{r}) \right\| + \left\| \Phi_{r}(\tau_{r}) \right\| \notag\\
	&\leq \left(C_0(t^*) + C_1\right) \mathbf{e}^{\lambda^* \tau_r}, \quad \forall\, r = j_k, j_k+1, \cdots, j_{k+\ell}-1.
\end{align*}
Then, we can further obtain from \eqref{eq-product-Xi-product-Psi} that
\begin{align}\label{eq-lemma-Psi1}
	&\quad\, \left\|\prod_{r=j_k}^{j_{k+\ell}-1}\Xi_r(\tau_r) - \prod_{r=j_k}^{j_{k+\ell}-1}\Psi_r(\tau_r) \right\| \notag\\
	& \le  \left(j_{k+\ell}-j_k\right) \left(C_{0}(t^*)\mathbf{e}^{-\alpha \tau_{s'}}\right) \notag\\
	& \qquad\qquad\qquad \cdot\left((C_0(t^*) + C_1)^{j_{k+\ell} - j_k -1} \mathbf{e}^{\lambda^* (T_\mathrm{c} \ell - \tau_{s'})}\right) \notag\\
	& \le \frac{C_0(t^*)(j_{k+\ell}-j_k)}{C_0(t^*) + C_1} \notag\\
	& \qquad\qquad\qquad \cdot\left(C_0(t^*) + C_1\right)^{j_{k+\ell} - j_k}
\mathbf{e}^{\lambda^* T_\mathrm{c} \ell - (\alpha + \lambda^*)\tau}.
\end{align}
Since $j_{k+\ell}-j_k \le \left\lceil\frac{T_\mathrm{c}}{\tau}\right\rceil\ell,\, \forall\, k=0,1,2,\ldots$,
if we let
\begin{align*}
	C_3(t^*) = \frac{C_0(t^*)\left\lceil\frac{T_\mathrm{c}}{\tau}\right\rceil\ell}{C_0(t^*) + C_1}\left(\left(C_0(t^*) + C_1\right)^{\left\lceil\frac{T_\mathrm{c}}{\tau}\right\rceil} \mathbf{e}^{\lambda^* T_\mathrm{c}}\right)^{\ell}
\end{align*}
then it follows from  \eqref{eq-lemma-Psi1} that
\begin{align}\label{eq-lemma-Psi}
	\left\|\prod_{r=j_k}^{j_{k+1}-1}\Xi_r(\tau_r) - \prod_{r=j_k}^{j_{k+1}-1}\Psi_r(\tau_r) \right\| \leq C_3(t^*) \mathbf{e}^{-(\alpha + \lambda^*)\tau}.
\end{align}

Now, combining \eqref{eq-prod-Psi} and \eqref{eq-lemma-Psi} gives
\begin{align*}
	&\quad\, \left\|\prod_{r=j_k}^{j_{k+\ell}-1}\Xi_{r}(\tau_r)\right\| \notag\\
	&\le \left\|\prod_{r=j_k}^{j_{k+\ell}-1}\Psi_{r}(\tau_r)\right\| +
	\left\|\prod_{r=j_k}^{j_{k+1}-1}\Xi_r(\tau_r) - \prod_{r=j_k}^{j_{k+1}-1}\Psi_r(\tau_r) \right\| \notag\\
	&\le C_{1} \left(\delta \mathbf{e}^{\lambda^* T_\mathrm{c} }\right)^{\ell} + C_3(t^*) \mathbf{e}^{-(\alpha + \lambda^*)\tau},
\quad \forall \, k=0,1,2,\ldots.
\end{align*}
From \eqref{eq-l} and \eqref{eq-C-0-ell-<1},
we see that $\ell$ has been chosen such that $C_{1} \left(\delta \mathbf{e}^{\lambda^* T_\mathrm{c} }\right)^{\ell} <1$.
Thus, we can further choose the positive number $\alpha$ to be sufficiently large so that
\begin{align*}
	C_{1} \left(\delta \mathbf{e}^{\lambda^* T_\mathrm{c} }\right)^{\ell} + C_3(t^*) \mathbf{e}^{-(\alpha + \lambda^*)\tau} < 1.
\end{align*}
Consequently, there exist a positive integer $\ell$ satisfying \eqref{eq-l}
and a corresponding positive number $\alpha(\ell)$ defined as
\begin{align*}
	\alpha(\ell, t^*) :=  \frac{\ln C_3(t^*) - \ln \left(1 - C_{1} \left(\delta \mathbf{e}^{\lambda^* T_\mathrm{c} }\right)^{\ell}\right)}{\tau}- \lambda^*
\end{align*}
such that, if $\alpha > \alpha(\ell, t^*)$ and $K$ is designed as \eqref{eq-K},
then \eqref{eq-lemma-ratio} holds for 
\begin{align}\label{eq-relation}
\rho = C_1 \left(\delta \mathbf{e}^{\lambda^* T_\mathrm{c} }\right)^{\ell} + C_3(t^*) \mathbf{e}^{-(\alpha + \lambda^*)\tau}<1. 
\end{align}
The proof is thus complete.
\end{Proof}

\balance


\begin{thebibliography}{99}


\bibitem{CaiSuHuang2022}
H.~Cai, Y.~Su, and J.~Huang,
\textit{Cooperative Control of Multi-Agent Systems:
Distributed-Observer and Distributed-Internal-Model Approaches},
Switzerland: Springer, 2022.


\bibitem{Cheng2008}
D. Cheng, J. Wang, and X. Hu,
``An extension of LaSalle's invariance principle and its application to multi-agent consensus,''
\emph{IEEE Transactions on Automatic Control},
vol.~53, no.~7, pp.~1765--1770, 2008.



\bibitem{Godsil2001}
C. Godsil and G. F. Royle,
\emph{Algebraic Graph Theory},
Springer Science \& Business Media, 2001.




\bibitem{Hong2007}
Y. Hong, L. Gao, D. Cheng, and J. Hu,
``Lyapunov-based approach to multiagent systems with switching jointly connected interconnection,"
\textit{IEEE Transactions on Automatic Control},
vol.~52, no.~5, pp.~943--948, 2007.


\bibitem{Hu2007}
J. Hu and Y. Hong, ``Leader-following coordination of multi-agent systems with coupling time delays," \textit{Physica A: Statistical Mechanics and its Applications},
vol.~374, no.~2, pp.~853--863, 2007.



\bibitem{Jadbabaie2003}
A. Jadbabaie, J. Lin, and A. S. Morse,
``Coordination of groups of mobile autonomous agents using nearest neighbor rules,"
\textit{IEEE Transactions on Automatic Control},
vol.~48, no.~6, pp.~988--1001, 2003.








\bibitem{Lin2005}
Z. Lin, B. Francis, and Maggiore,
``Necessary and sufficient graphical conditions for formation control of unicycles,''
\emph{IEEE Transactions on Automatic Control},
vol.~50, no.~1, pp.~121--127, 2005.


 
\bibitem{LinWang2014}
Z. Lin, L. Wang, Z. Han, and M. Fu,
``Distributed formation control of multi-agent systems using complex Laplacian,'' 
\emph{IEEE Transactions on Automatic Control}, 
vol.~59, no.~7, pp.~1765--1777, 2014.



\bibitem{MaQin21}
Q. Ma, J. Qin, X. Yu, and L. Wang,
``On necessary and sufficient conditions for exponential consensus in dynamic networks via uniform complete observability theory,"
\textit{IEEE Transactions on Automatic Control}, vol. 66, no. 10, pp. 4975--4981, 2021.


%





\bibitem{Nedic09}
 A. Nedi\'{c} and A. Ozdaglar,
 ``Distributed subgradient methods for multi-agent optimization,''
 \emph{IEEE Transactions on Automatic Control},
 vol.~54, no.~1, pp.~48--61, 2009.


 
\bibitem{Nedic15}
 A. Nedi\'{c} and A. Olshevsky,
 ``Distributed optimization over time-varying directed graphs,'' 
 \emph{IEEE Transactions on Automatic Control},
 vol.~60, no.~3, pp.~601--615, 2015.

 



\bibitem{Mohar-1991GC}
B. Mohar,
``Eigenvalues, diameter, and mean distance in graphs,"
\textit{Graphs Combinatories},
vol.~7, no.~1, pp.~53--64, 1991.


\bibitem{Ni2010}
W. Ni and D. Cheng,
``Leader-following consensus of multi-agent systems under fixed and switching topologies,"
\textit{Systems \& Control Letters},
vol.~59, no.~3, pp.~209--217, 2010.


 
\bibitem{Olfati-SaberMurray04}
R. Olfati-Saber and R. M. Murray,
``Consensus problems in networks of agents with switching topology and time-delays,''
\textit{IEEE Transactions on Automatic Control},
vol.~49, no.~9, pp.~1520--1533, 2004.



\bibitem{Qin2014}
J. Qin and C. Yu,
``Exponential consensus of general linear multi-agent systems under directed dynamic topology,''
\emph{Automatica},
vol.~50, no.~9, pp.~2327--2333, 2014.



\bibitem{Ren2005}
 W. Ren and R. W. Beard,
 ``Consensus seeking in multiagent systems under dynamically changing interaction topologies,"
 \textit{IEEE Transactions on Automatic Control},
 vol.~50, no.~5, pp.~655--661, 2005.
 
 
 
\bibitem{Ren2007}
W. Ren and E. Atkins,
``Distributed multi-vehicle coordinated control via local information exchange,'' 
\emph{International Journal of Robust and Nonlinear Control},
vol.~17, pp.~1002--1033, 2007.



\bibitem{Ren2008}
W. Ren,
``On consensus algorithms for double-integrator dynamics,"
\textit{IEEE Transactions on Automatic Control},
vol.~53, no.~6, pp.~1503--1509, 2008.




\bibitem{SuHuang12}
Y. Su and J. Huang,
``Cooperative output regulation of linear multi-agent systems,''
\emph{IEEE Transactions on Automatic Control},
vol.~57, no.~4, pp.~1062--1066, 2012.



 
\bibitem{SuHuang12Cyber}
Y. Su and J. Huang, 
``Cooperative output regulation with application to multi-agent consensus under switching network,'' 
\emph{IEEE Transactions on Systems, Man, and Cybernetics, Part B (Cybernetics)}, 
vol.~42, no.~3, pp.~864--875, 2012.


\bibitem{SuHuang-2012TAC}
Y. Su and J. Huang,
``Stability of a class of linear switching systems with applications to two consensus problems,"
\textit{IEEE Transactions on Automatic Control},
vol.~57, no.~6, pp.~1420--1430, 2012.


 \bibitem{SuLee22}
 Y. Su and T. C. Lee,
 ``Output feedback synthesis of multiagent systems with jointly connected switching networks:
 A separation principle approach,"
 \textit{IEEE Transactions on Automatic Control},
 vol.~67, no.~2, pp.~941--948, 2022.
 

 \bibitem{Tuna08}
 S. E. Tuna,
 ``LQR-based coupling gain for synchronization of linear systems,'' 
 arXiv preprint, arXiv:0801.3390, 2008.



\bibitem{WangMorse18}
L. Wang and A. S. Morse,
``A distributed observer for a time-invariant linear system,''
\emph{IEEE Transactions on Automatic Control},
vol.~63, no.~7, pp.~2123--2130, 2018.



\bibitem{WangZhuFeng2019TAC}
X. Wang, J. Zhu, and J. Feng,
``A new characteristic of switching topology and synchronization of linear multiagent systems,"
\textit{IEEE Transactions on Automatic Control},
vol.~64, no.~7, pp.~2697--2711, 2019.







\bibitem{YeHu17}
M. Ye and G. Hu,
``Distributed Nash equilibrium seeking by a consensus based approach,''
\emph{IEEE Transactions on Automatic Control},
vol.~62, no.~9, pp.~4811--4818, 2017.


 
\bibitem{YeHu18}
M. Ye, G. Hu, and F. L. Lewis,
``Nash equilibrium seeking for $N$-coalition noncooperative games,''
\emph{Automatica},
vol.~95, pp.~266--272, 2018.
 


 
\bibitem{ZhangLu24}
L. Zhang, M. Lu, F. Deng, and J. Chen, 
``Distributed state estimation under jointly connected switching networks: Continuous-time linear systems and discrete-time linear systems,'' 
\emph{IEEE Transactions on Automatic Control}, 
vol.~69, no.~2, pp.~1104--1111, 2024.
 




\end{thebibliography}
\end{document}